\documentclass[11pt]{article}
\usepackage[T1]{fontenc}
\usepackage[utf8]{inputenc}
\usepackage{lmodern}
\usepackage[a4paper,margin=28mm]{geometry}
\usepackage{amsmath,amssymb,amsthm,amscd,mathtools}
\usepackage{microtype}
\usepackage{enumitem}
\usepackage{xcolor}
\usepackage{hyperref}
\hypersetup{colorlinks=true,linkcolor=blue!45!black,citecolor=blue!45!black,
  urlcolor=blue!45!black,pdftitle={A proof of Liu--Xu Conjecture 5.2},
  pdfauthor={}}
\setlist[enumerate]{leftmargin=*,itemsep=5pt}
\setlist[itemize]{leftmargin=*,itemsep=4pt}
\numberwithin{equation}{section}
\newtheorem{theorem}{Theorem}[section]
\newtheorem{proposition}[theorem]{Proposition}
\newtheorem{lemma}[theorem]{Lemma}
\newtheorem{corollary}[theorem]{Corollary}
\theoremstyle{definition}
\newtheorem{definition}[theorem]{Definition}
\theoremstyle{remark}
\newtheorem{remark}[theorem]{Remark}
\newcommand{\Mbar}{\overline{\mathcal M}}
\newcommand{\EE}{\mathbb E}
\newcommand{\OO}{\mathcal O}
\newcommand{\CC}{\mathbb C}
\newcommand{\QQ}{\mathbb Q}
\newcommand{\RR}{\mathbb R}

\newcommand{\rk}{\operatorname{rk}}
\newcommand{\pr}{\operatorname{pr}}
\newcommand{\codim}{\operatorname{codim}}
\newcommand{\id}{\operatorname{id}}

\title{Balancing tautological intersection numbers}
\author{Kefeng Liu \\ Chongqing University of Technology\\ kefengliu@cqut.edu.cn
\and 
Hao Xu \\ Chongqing University of Technology\\ haoxu@cqut.edu.cn}
\date{}

\begin{document}
\maketitle

\begin{abstract}
We prove the three balancing inequalities conjectured by the authors about 20 years ago: moving one unit from a larger index to a smaller one cannot
decrease a mixed tautological intersection number, whether the two
distinguished insertions are psi classes, kappa classes, or Hodge Chern
classes. 
\end{abstract}

\tableofcontents

\section{The conjecture, conventions, and main theorem}
\label{sec:statement}

All spaces and stacks are over $\CC$. Fix nonnegative integers $g,n$
with $2g-2+n>0$, and write
\[
  X=\Mbar_{g,n},\qquad D=3g-3+n.
\]
Intersection numbers on $X$ mean degrees in the rational Chow theory of
the proper Deligne--Mumford stack, as developed in \cite{Vistoli};
in particular, the usual automorphism factors are included.
We consider products of total
codimension $D$. A product not of top degree contributes zero to the
pushforward to a point.

Let $\mathcal L_i$ be the $i$th cotangent line bundle and
$\psi_i=c_1(\mathcal L_i)$. Let $\EE$ be the rank-$g$ Hodge bundle,
and set
\[
  \lambda_b=c_b(\EE),\qquad
  \lambda_0=1,\qquad \lambda_b=0\quad(b>g).
\]
More generally, for any vector bundle $E$ we use $c_0(E)=1$ and
$c_j(E)=0$ for $j<0$ or $j>\rk E$.

Write $\pi:C\to X$ for the universal curve, with marked sections
having image divisors $D_1,\ldots,D_n$. We use the logarithmic kappa
classes \cite{AC}
\begin{equation}\label{eq:kappa-definition}
  L=\omega_\pi(D_1+\cdots+D_n),\qquad
  \ell=c_1(L),\qquad
  \kappa_a=\pi_*(\ell^{a+1})\quad(a\ge0).
\end{equation}
Under the identification $C=\Mbar_{g,n+1}$, the class $\ell$ is
$\psi_{n+1}$; this will be discussed in Section \ref{subsec:log-line}.
The integer $n$ always counts all original markings; in particular,
two distinguished psi insertions occupy two of these $n$ markings.

The following is conjectured by the authors in \cite[Conjecture 5.2]{LX}, proved here in Section \ref{sec:proof}.

\begin{theorem}\label{thm:main}
Let $0\le p<q$. All additional indices in the following statements
are nonnegative integers, and the indicated products have total
codimension $D$.
\begin{enumerate}[label=\textup{(\roman*)}]
\item Suppose $n\ge2$, and set
\[
  P_\psi=\prod_{i=3}^n\psi_i^{d_i}
          \prod_{j=1}^m\kappa_{a_j}
          \prod_{t=1}^k\lambda_{b_t}.
\]
Then
\begin{equation}\label{eq:main-psi}
  \int_X\psi_1^p\psi_2^qP_\psi
  \le \int_X\psi_1^{p+1}\psi_2^{q-1}P_\psi.
\end{equation}
\item Set
\[
  P=\prod_{i=1}^n\psi_i^{d_i}
       \prod_{j=1}^m\kappa_{a_j}
       \prod_{t=1}^k\lambda_{b_t}.
\]
Then
\begin{equation}\label{eq:main-kappa}
  \int_X\kappa_p\kappa_qP
  \le \int_X\kappa_{p+1}\kappa_{q-1}P.
\end{equation}
\item With $P$ as in \textup{(ii)},
\begin{equation}\label{eq:main-lambda}
  \int_X\lambda_p\lambda_qP
  \le \int_X\lambda_{p+1}\lambda_{q-1}P.
\end{equation}
\end{enumerate}
Empty products are $1$, and repeated insertions are allowed.
\end{theorem}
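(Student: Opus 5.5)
Each of the three inequalities is a statement about a difference of two intersection numbers. The plan is to realize that difference as the degree of a manifestly nonnegative class. The model is the classical log-concavity mechanism: nef classes and Hodge index (Khovanskii--Teissier) type inequalities. The more promising route here, though, is a direct positivity computation via a "difference of Chern classes" identity.

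\textbf{Case (i).} For (i), write
\[
\psi_1^{p+1}\psi_2^{q-1}-\psi_1^p\psi_2^q=\psi_1^p\psi_2^{q-1}(\psi_1-\psi_2).
\]
The strategy is to pass to a space where $\psi_1-\psi_2$, or rather the relevant product, becomes effective, or to reduce to $p=q-1$ by telescoping. Concretely, set $f(j)=\int_X\psi_1^{j}\psi_2^{D'-j}P_\psi$ with $D'=p+q$. Then (i) is the statement that $f$ is nondecreasing toward the middle. It would follow from two properties:
\begin{itemize}
\item symmetry $f(j)=f(D'-j)$, obtained by swapping markings $1,2$, which fixes $P_\psi$;
\item log-concavity $f(j)^2\ge f(j-1)f(j+1)$, with $f\ge0$.
\end{itemize}
A positive, symmetric, log-concave sequence is unimodal with peak at the center, and $p<q$ means $p+1\le (p+q)/2$ side. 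So the step $p\to p+1$ moves toward the center, giving \eqref{eq:main-psi}.

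Log-concavity is the Khovanskii--Teissier inequality. All three classes are nef on the coarse space, and the class $P_\psi$ is a product of nef classes: $\psi_i$ and $\kappa_a$ are nef (or limits of ample classes), and $\lambda_b$ are nef-type because $\EE$ is a nef bundle. This last point needs care. I would use the Fulton--Lazarsfeld positivity of Schur polynomials of nef bundles to say that $\lambda_b$ times nef monomials integrates nonnegatively. I would also use the mixed Hodge--Riemann inequalities in the form valid when the extra factor is a product of nef divisors. For the $\lambda$ factor this is the main obstacle, since Khovanskii--Teissier requires divisor factors. My first attempt is to replace $\prod\lambda_{b_t}$ by a pushforward from a flag bundle of $\EE$. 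There, Chern classes of the nef quotients become products of nef divisors, $\lambda_b$ arising as an pushforward of top Chern of a quotient, and the inequality is lifted to that flag variety.

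\textbf{Case (ii).} For (ii), I would use $\kappa_a=\pi_*(\psi_{n+1}^{a+1})$ on two extra points. Then
\[
\int\kappa_p\kappa_qP=\int_{\Mbar_{g,n+2}}\psi_{n+1}^{p+1}\psi_{n+2}^{q+1}\pi^*P
\]
up to the standard correction terms $\kappa_{p+q}$, which are symmetric under the shift $(p,q)\to(p+1,q-1)$ and cancel in the difference; here $\pi^*$ of the $\psi_i$ factors is also corrected. The cleaner option is to stay with the log classes $\ell$ on the fiber product $C\times_X C$. There $\kappa_p\kappa_q=\pr_*(\ell_1^{p+1}\ell_2^{q+1})$ exactly, and $\ell_1,\ell_2$ are nef, so the same symmetric log-concave argument on $C\times_XC$ applies verbatim.

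\textbf{Case (iii).} For (iii), set $\lambda$-sequence $h(j)=\int c_j(\EE)c_{s-j}(\EE)P$. I would use the splitting principle on the full flag bundle of $\EE^\vee$ or $\EE$, where the Chern roots become nef line bundle classes since $\EE$ is nef. Then $c_jc_{s-j}$ is a product of elementary symmetric polynomials in nef roots. Newton-type inequalities for elementary symmetric functions, i.e.\ $e_j^2\ge e_{j-1}e_{j+1}$ in the mixed-volume sense, again give log-concavity via Khovanskii--Teissier/Alexandrov--Fenchel on the flag bundle. Combined with the trivial symmetry, this finishes (iii).

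Again, the delicate point is justifying the Hodge-index inequalities with non-divisor factors ($\lambda$'s, and singular DM stack). For this I would pass to a finite cover by a smooth projective variety (Looijenga/Boggi--Pikaart level structures), where everything pulls back and the mixed Hodge--Riemann theorem for nef classes holds after perturbing nef to ample.
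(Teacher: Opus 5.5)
\textbf{Parts (i) and (ii).} Your plan has two genuine gaps here.

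The first gap is the unimodality step. Symmetry, nonnegativity and log-concavity do not imply the balancing inequality. The sequences $(1,0,0,1)$ and $(0,1,0,0,1,0)$ are symmetric, nonnegative and log-concave, yet they violate $I_p\le I_{p+1}$ at $(p,q)=(0,3)$ and $(1,4)$. You only establish $f\ge0$, and strict positivity is neither proved nor true in the cases at hand. Whole sequences vanish, for instance when $\lambda_g^2$ appears. In the kappa setting on the fiber product, the end terms $\int\ell_2^{s+2}\Omega$ vanish identically. So internal zeros would have to be ruled out separately, and you have no tool for that.

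The second gap is the Hodge--Riemann input. Khovanskii--Teissier needs the fixed factor to be a product of nef \emph{divisors}. Neither $\kappa_a$ nor $\lambda_b$ for $a,b\ge2$ is of that form.
\begin{itemize}
\item The kappas can be handled by lifting \emph{all} of them, not just the distinguished pair, to the fiber power $Y_m=C\times_X\cdots\times_X C$. There $\prod_j\kappa_{a_j}$ is exactly the pushforward of $\prod_j\ell_j^{a_j+1}$.
\item Your flag-bundle fix for the $\lambda$'s fails. On a flag bundle of a nef bundle only the final quotient line bundle is nef; the other Chern roots are first Chern classes of subquotients and are not nef. For $\EE=\OO^{\oplus2}$ the flag bundle is $X\times\mathbb P^1$ and the two roots are $\pm$ the hyperplane class.
\end{itemize}

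What is actually needed is a Hodge--Riemann theorem for products of Schur classes of nef bundles (Ross--Toma), together with nefness of $\EE$ on all of $\Mbar_{g,n}$. With that theorem the paper bypasses unimodality altogether:
\begin{itemize}
\item For $r=q-p-2\ge0$ set $\eta=(AB)^p h_r(A,B)\Omega$. Since $h_r(A,B)=s_{(1^r)}(L_A\oplus L_B)$ is a Schur class of a nef bundle, $\eta$ is again a product of nef Schur classes.
\item Symmetry gives $u=\int A^2\eta=\int B^2\eta$. The Hodge index inequality plus Schur positivity give $u\le v=\int AB\eta$.
\item The identity $(A-B)^2h_r=A^{r+2}+B^{r+2}-A^{r+1}B-AB^{r+1}$ turns $\int(A-B)^2\eta\le0$ into $2(I_p-I_{p+1})\le0$.
\end{itemize}
This uses no division and no positivity hypothesis.

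\textbf{Part (iii).} Your approach fails for the same nefness reason. The roots on the flag bundle are not nef, so there is no Alexandrov--Fenchel or Newton-type inequality available, and the internal-zero problem would recur anyway. The missing idea is much simpler. For $q\ge p+2$,
\[
  \lambda_{p+1}\lambda_{q-1}-\lambda_p\lambda_q
  =\det\begin{pmatrix}c_{q-1}(\EE)&c_q(\EE)\\ c_p(\EE)&c_{p+1}(\EE)\end{pmatrix}
  =s_{(q-1,p+1)}(\EE),
\]
which is a single Schur class of the nef bundle $\EE$. After lifting the kappas to $Y_m$, the difference of the two sides of (iii) is the integral of a product of Schur classes of nef bundles. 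It is therefore nonnegative by Fulton--Lazarsfeld numerical positivity, with no log-concavity or Hodge--Riemann needed.

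\textbf{Smooth model.} A finite level-structure cover of $X$ is not enough once you work on fiber powers, which are singular. You need a resolution of $Y_m\times_X Z$, and the marking or factor swap must be transferred numerically, since it need not lift to the resolution.
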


These are weak inequalities. In particular, when $q=p+1$ they are
equalities, by symmetry or commutativity. No general strictness
assertion is intended. Faber \cite{Faber} had an algorithm calculating these integrals
based on the Witten-Kontsevich Theorem \cite{Kontsevich, Witten}.

Recently, Schmitt \cite{Schmitt} proved the pure-psi case:
\begin{equation}
  \int_X\psi_1^p\psi_2^q \prod_{i=3}^n\psi_i^{d_i}
  \le \int_X\psi_1^{p+1}\psi_2^{q-1} \prod_{i=3}^n\psi_i^{d_i}
\end{equation}
when $0\le p<q$. This is a special case of \eqref{eq:main-psi}.

Our work was inspired by the balancing mechanism of Schmitt \cite{Schmitt}.
The extra input for general mixed intersections is 
the product--Schur Hodge--Riemann theorem and the exact fiber
product representation of kappa products.

\section{Schur conventions and the positivity theorems}
\label{sec:positivity}

All theorem, remark, equation, and page numbers attached to
\cite{RT} in this paper refer to arXiv:2106.11285v1, which is
listed together with the published chapter in the bibliography.
This convention fixes the numbering of the sources used below.

\subsection{Schur classes and a rank-two identity}

The following is the classical Chern-class form of the
dual Jacobi--Trudi definition; see \cite[Chapter I, Sections 2--3]{Macdonald}
and \cite[Section 3]{RT}.

\begin{definition}\label{def:schur}
For a partition $\mu=(\mu_1\ge\cdots\ge\mu_l\ge0)$, define
\begin{equation}\label{eq:schur-definition}
  s_\mu(E)=\det\bigl(c_{\mu_i+j-i}(E)\bigr)_{1\le i,j\le l}.
\end{equation}
Its codimension is $|\mu|=\sum_i\mu_i$. The empty partition gives $1$.
\end{definition}

This is the Chern-determinant convention used in both \cite{LX} and
\cite{RT}. It is conjugate to another common convention in symmetric
function theory. In particular,
\begin{equation}\label{eq:schur-examples}
  s_{(b)}(E)=c_b(E),\qquad
  s_{(1,1)}(E)=c_1(E)^2-c_2(E).
\end{equation}
For a rank-$e$ bundle the relevant rank bound in this convention is
$\mu_1\le e$; partitions with larger first part give zero.

The next lemma is a classical symmetric-function identity. 

\begin{lemma}[Classical rank-two Schur identity]\label{lem:rank-two}
Let $L_A,L_B$ be line bundles with first Chern classes $A,B$.
For every $r\ge0$,
\begin{equation}\label{eq:complete-schur}
  s_{(1^r)}(L_A\oplus L_B)
    =h_r(A,B):=\sum_{j=0}^r A^{r-j}B^j,
\end{equation}
where $h_0=1$. Moreover,
\begin{equation}\label{eq:key-polynomial}
  (A-B)^2h_r(A,B)
  =A^{r+2}+B^{r+2}-A^{r+1}B-AB^{r+1}.
\end{equation}
\end{lemma}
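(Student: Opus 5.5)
We prove the two claims in the order stated, working with Chern roots. For the rank-two bundle $E=L_A\oplus L_B$ we have $c(E)=(1+A)(1+B)$, so $c_1(E)=A+B$, $c_2(E)=AB$, and $c_j(E)=0$ for $j\ge3$. In terms of symmetric functions these are the elementary symmetric polynomials $e_j(A,B)$.

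\textbf{The Schur identity.} By Definition~\ref{def:schur}, $s_{(1^r)}(E)$ is the $r\times r$ determinant $\det\bigl(c_{1+j-i}(E)\bigr)$. The entry $c_{1+j-i}$ vanishes unless $j\ge i-1$, so the matrix is upper Hessenberg:
\begin{itemize}
\item the diagonal entries are $c_1=A+B$;
\item the superdiagonal entries are $c_2=AB$;
\item the subdiagonal entries are $c_0=1$;
\item all entries further above the superdiagonal vanish, since $c_j=0$ for $j\ge3$.
\end{itemize}
Write $D_r$ for this determinant. Expanding along the last row gives the recursion
\[
D_r=(A+B)D_{r-1}-AB\,D_{r-2},\qquad D_0=1,\quad D_1=A+B.
\]
We then check that $h_r(A,B)$ satisfies the same recursion. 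Split
\[
h_r=A^r+B\,h_{r-1}(A,B),
\]
and use the elementary identity $(A+B)h_{r-1}-AB\,h_{r-2}=h_r$. This follows by comparing coefficients, or more cleanly from the generating function
\[
\sum_r h_r t^r=\frac{1}{(1-At)(1-Bt)}=\frac{1}{1-(A+B)t+ABt^2}.
\]
This denominator encodes exactly the recursion above, and the initial values agree: $h_0=1$, $h_1=A+B$. By induction, $D_r=h_r$.

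Alternatively, one can cite the dual Jacobi--Trudi identity $\det(e_{\mu_i+j-i})=s_{\mu'}$. Here $\mu'=(r)$, and $s_{(r)}=h_r$. This is consistent with the remark in the paper that its convention is conjugate to the usual one.

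\textbf{The polynomial identity.} For $A\neq B$ we have
\[
h_r=\frac{A^{r+1}-B^{r+1}}{A-B}.
\]
Hence
\[
(A-B)^2h_r=(A-B)(A^{r+1}-B^{r+1})=A^{r+2}-AB^{r+1}-BA^{r+1}+B^{r+2}.
\]
Since both sides of \eqref{eq:key-polynomial} are polynomials, the identity holds as a polynomial identity. This is best phrased as $(A-B)h_r=A^{r+1}-B^{r+1}$, which is a telescoping sum. Equivalently, one can use $r\ge0$ directly: $r=0$ gives $(A-B)^2$, and the telescoping gives the general case.

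\textbf{Main obstacle.} The only delicate point is matching the paper's Chern-determinant convention with the correct symmetric function. One must recognize that $(1^r)$ in this convention yields the complete symmetric function $h_r$, not the elementary one. The Hessenberg recursion settles this without appeal to conventions.
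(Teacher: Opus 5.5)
Your proof is correct and follows essentially the same route as the paper: the determinant for $s_{(1^r)}(L_A\oplus L_B)$ is tridiagonal with entries $c_1=A+B$, $c_2=AB$, $c_0=1$, and cofactor expansion gives the recursion $D_r=(A+B)D_{r-1}-AB\,D_{r-2}$, which $h_r$ also satisfies with the same initial values (you expand along the last row and use the generating function, while the paper expands along the first row and multiplies out); the second identity comes from the telescoping relation $(A-B)h_r=A^{r+1}-B^{r+1}$. The telescoping form you end with is the right one to state, since it holds in every commutative ring and needs no appeal to $A\neq B$ in a Chow ring with nilpotents.
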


\begin{proof}
For $E=L_A\oplus L_B$ one has
$c_1(E)=A+B$, $c_2(E)=AB$, and $c_j(E)=0$ for $j\ge3$.
The determinant for $s_{(1^r)}(E)$ is tridiagonal, with diagonal
$c_1(E)$, upper diagonal $c_2(E)$, and lower diagonal $1$.
Writing its value as $H_r$, expansion along the first row gives
\[
  H_0=1,\quad H_1=A+B,\quad
  H_r=(A+B)H_{r-1}-ABH_{r-2}\quad(r\ge2).
\]
The polynomials $h_r=\sum_{j=0}^r A^{r-j}B^j$ satisfy exactly the same
initial conditions and recurrence, as is seen by multiplying out the
right-hand side. Induction proves \eqref{eq:complete-schur}.
There is no upper bound on $r$: the largest part of $(1^r)$ is $1$.

The telescoping identity
\[
  (A-B)h_r(A,B)=A^{r+1}-B^{r+1}
\]
holds in every commutative ring. Multiplying once more by $A-B$
proves \eqref{eq:key-polynomial}, including $r=0$.
\end{proof}

\subsection{Numerical positivity and weak Hodge--Riemann}

A vector bundle $E$ on a projective variety is called nef if
$\OO_{\mathbb P(E)}(1)$ is nef, with $\mathbb P(E)$ parametrizing
one-dimensional quotients; see \cite[Chapter 6]{Lazarsfeld}.
We use the standard facts that pullback, direct sum, and extension
preserve nefness, recalled with proof in Lemma~\ref{lem:nef-extension}.
On a proper moduli stack,
nefness is understood by pullback to complete test curves. These
properties consequently apply after the scheme covers used below.

We quote the nef product form of numerical Schur positivity from
\cite[Remark 5.4]{RT}.

\begin{theorem}[Numerical Schur positivity, Ross--Toma]\label{thm:schur-positive}
Let $Y$ be a smooth projective variety of dimension $N$, let
$E_1,\ldots,E_s$ be nef vector bundles, and let $\mu^1,\ldots,\mu^s$
be partitions with $\sum_j|\mu^j|=N$. Then
\begin{equation}\label{eq:schur-positive}
  \int_Y\prod_{j=1}^s s_{\mu^j}(E_j)\ge0.
\end{equation}
\end{theorem}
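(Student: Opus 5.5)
The statement is quoted from \cite[Remark 5.4]{RT}, so in the paper it rests on that citation. If I had to prove it myself, I would reduce to globally generated bundles, where Schur classes are classes of degeneracy loci and the integral counts points of an effective zero-cycle. The key point is that a Schur class with the determinantal convention \eqref{eq:schur-definition} is, for a globally generated bundle, the class of an effective Schubert-type degeneracy locus.

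\emph{Step 1: the globally generated case.} Suppose each $E_j$ is globally generated of rank $e_j$, and choose $V_j\subset H^0(Y,E_j)$ finite dimensional with $V_j\otimes\OO_Y\to E_j$ surjective. This gives a morphism $\phi_j:Y\to G_j$ to a Grassmannian of rank-$e_j$ quotients of $V_j$, with $E_j=\phi_j^*Q_j$. By the Giambelli/Kempf--Laksov formula, $s_{\mu^j}(Q_j)$ is the class of a Schubert variety $\Sigma_{\mu^j}(F_j)$ depending on a flag $F_j$ in $V_j$, provided the convention is matched so that $c_b(Q_j)$ is a special Schubert class. Hence
\[
  \prod_j s_{\mu^j}(E_j)=\Phi^*\Bigl[\prod_j\Sigma_{\mu^j}(F_j)\Bigr],\qquad \Phi=(\phi_1,\ldots,\phi_s):Y\to G:=\prod_j G_j .
\]
The group $\prod_j GL(V_j)$ acts transitively on $G$. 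By Kleiman transversality, for general flags the preimage $\Phi^{-1}\bigl(\prod_j\Sigma_{\mu^j}(F_j)\bigr)$ has the expected codimension $N$. It is then a finite reduced set of points, each counted with multiplicity one, so the integral \eqref{eq:schur-positive} is a nonnegative count.

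\emph{Step 2: from nef to globally generated.} Fix a very ample $H$ on $Y$ and a large integer $m$ such that $E_j\otimes H^{m}$ is globally generated for all $j$. By the Bloch--Gieseker covering construction, for each integer $k\ge1$ there is a finite flat cover $f:Y_k\to Y$ with $Y_k$ smooth projective and $f^*H=M^{k}$ for a line bundle $M$. The integral pulled back to $Y_k$ equals $\deg f$ times the original one, so the sign is preserved. The intended claim is that on $Y_k$ each $f^*E_j\otimes M^{m}$ is globally generated "up to a controlled twist". Formally, $s_\mu(E\otimes M^m)$ is a polynomial in $c_1(M)$ whose coefficients are Schur-type classes of $E$. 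Thus the integrals of $\prod_j s_{\mu^j}(E_j\otimes\delta)$ for the $\QQ$-twist $\delta=H^{m/k}$ form a polynomial $P(t)$ in $t=m/k$, and $P(0)$ is the desired integral. If I can show $P(t)\ge0$ for all small rational $t>0$, continuity gives $P(0)\ge0$.

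\emph{Step 3: the obstacle.} The hard step is Step 2. Nefness alone does not make $f^*E_j\otimes M^{m}$ globally generated for bounded $m$ as $k$ grows; it only gives ampleness of the $\QQ$-twists $E_j\langle\varepsilon H\rangle$. My first attempt would therefore replace Step 1 by the Fulton--Lazarsfeld theorem for ample bundles, i.e.\ numerical positivity of Schur classes, applied on the fibre product of Grassmann or flag bundles $\mathbb F=\mathbb P\text{-towers of }E_1,\ldots,E_s$ over $Y$. There, via the Kempf--Laksov pushforward formula, each $s_{\mu^j}(E_j)$ is a pushforward of a monomial in the first Chern classes of the tautological quotient line bundles. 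When every $E_j$ is ample, these line bundles are nef, and the product becomes an integral of nef divisor classes on a projective variety. That integral is nonnegative by Kleiman's criterion, i.e.\ as a limit of ample intersections. The remaining care is to ensure the monomial can be chosen with nonnegative coefficients for the given convention; this is where the dual Jacobi--Trudi convention matters. The nef case then follows from the ample case by twisting with $\varepsilon H$ and letting $\varepsilon\to0$, using polynomiality as in Step 2.
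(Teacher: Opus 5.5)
The paper gives no argument for this statement; it simply quotes \cite[Remark 5.4]{RT}. Your Step 1 is a correct proof when every $E_j$ is globally generated: Giambelli matches the convention \eqref{eq:schur-definition}, and Kleiman transversality on $\prod_jG_j$ produces a reduced zero-cycle. Your final reduction from ample to nef is also correct: realise $E_j\langle tH\rangle$, $t=1/k$, as the ample bundle $f^*E_j\otimes M$ on a Bloch--Gieseker cover with $f^*H=M^k$, then use polynomiality in $t$ and continuity at $t=0$.

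The gap is the ample case, which your Step 3 tries to obtain from Kleiman's criterion on a flag bundle. The assertion that the tautological line bundles there are nef when $E$ is ample is false. On the full flag bundle one has
$s_\mu(E)=\pi_*\bigl(x_1^{\mu'_1+e-1}x_2^{\mu'_2+e-2}\cdots x_e^{\mu'_e}\bigr)$.
Here $x_1=c_1(\OO_{\mathbb P(E)}(1))$ is nef, but for $i\ge2$ the class $x_i$ is the first Chern class of a quotient line of the \emph{kernel} $\ker(\pi^*E\to Q_{i-1})$, and it need not be nef. For $E=\OO^{2}$ one has $\mathbb P(E)=Y\times\mathbb P^1$ and $x_2=-x_1$. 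The same happens in the Kempf--Laksov tower of projective bundles of kernels.

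What is nef for nef $E$ are determinants of quotients of $\pi^*E$, such as $x_1+\cdots+x_i$, and rewriting in those classes forces signs. For rank two, $c_2(E)=\pi_*\bigl(\xi^2\,\pi^*c_1(E)-\xi^3\bigr)$ on $\mathbb P(E)$. So your ``nonnegative coefficients'' caveat is not bookkeeping; it is the entire content of the theorem. If it could be arranged, the Bloch--Gieseker inequality $\int_S c_2(E)\ge0$ for nef rank-two bundles on a surface would follow from Kleiman's criterion alone. In fact it is a genuinely Hodge-theoretic statement: Bloch and Gieseker use Hard Lefschetz.

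To close the gap you must import the ample case as a theorem. The input is Fulton--Lazarsfeld's numerical positivity of Schur polynomials of ample bundles, in its several-bundle form. One way to get that form is to apply their positivity of cone classes to the product of Schubert cones inside the ample bundle $\bigoplus_j\mathrm{Hom}(\OO^{m_j},E_j)$.

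With that input, your twisting-and-covering limit yields exactly the nef product statement the paper quotes from \cite{RT}. Without it, your argument only establishes the globally generated case. As you note yourself, nefness cannot be reduced to global generation by bounded twists on the covers.
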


It includes products
of different bundles and repeated copies of the same bundle. A nef
divisor factor is $s_{(1)}(L)=c_1(L)$; its powers can be included as
repeated factors. Degree-zero factors may be omitted.

For a smooth projective $N$-fold with $N\ge2$ and
$\eta\in H^{N-2,N-2}(Y;\RR)$, write
\[
  Q_\eta(\alpha,\beta)=\int_Y\alpha\beta\eta,
  \qquad \alpha,\beta\in H^{1,1}(Y;\RR).
\]
The Hodge--Riemann property in the form needed here means that this
real symmetric form has signature $(+,-,\ldots,-)$.

The following theorem is quoted from
\cite[Theorem 7.4]{RT}.

\begin{theorem}[Ross--Toma]\label{thm:RT}
Let $Y$ be a smooth projective $N$-fold, $N\ge2$, and let $E_j$ be
nef vector bundles. If
\[
  \eta=\prod_j s_{\mu^j}(E_j),\qquad
  \sum_j|\mu^j|=N-2,
\]
then $\eta$ lies in the closure of the Hodge--Riemann cone.
In particular, $Q_\eta$ has at most one positive eigenvalue.
\end{theorem}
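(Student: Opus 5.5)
The statement is quoted from \cite[Theorem 7.4]{RT}. Below is how I would reprove it, concentrating on the consequence actually used later: $Q_\eta$ has at most one positive eigenvalue. The plan is to reduce to ample bundles, realize each Schur class as a pushforward of a product of ample divisor classes, apply the mixed Hodge--Riemann theorem upstairs, and restrict to pulled-back classes.

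\emph{Step 1: reduction to the ample case by perturbation.} Fix an ample class $h$ on $Y$. Replace each $E_j$ by the $\QQ$-twisted bundle $E_j\langle\epsilon h\rangle$, which is ample for rational $\epsilon>0$. Its Chern classes, and hence its Schur classes, are polynomials in $\epsilon$ that recover $s_{\mu^j}(E_j)$ at $\epsilon=0$. The condition of having at most one positive eigenvalue is closed under limits of real symmetric forms, since the eigenvalues vary continuously. The same limiting argument gives membership in the closure of the Hodge--Riemann cone. So it suffices to prove the strict Hodge--Riemann property, signature $(+,-,\ldots,-)$, when every $E_j$ is ample. If needed, a Bloch--Gieseker cover (finite and flat, which preserves the relevant forms up to a positive factor) turns the $\QQ$-twists into genuine bundles.

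\emph{Step 2: Schur classes as pushforwards of ample products.} For an ample bundle $E$ and a partition $\mu$, use the Fulton--Lazarsfeld/Kempf--Laksov description of $s_\mu(E)$. It is the pushforward, from a suitable flag bundle $\rho:F\to Y$, of a monomial in the first Chern classes of tautological line bundles built from $E$; after twisting by a relatively ample class, these can be taken ample (or as limits of ample classes) on $F$. For several bundles, take the fiber product of the corresponding flag bundles. This gives
\[
\eta=\rho_*\Theta,\qquad \Theta=\omega_1\cdots\omega_{M-2},
\]
with $\omega_i$ ample (or nef) $(1,1)$-classes on $F$, where $M=\dim F$.

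\emph{Step 3: Hodge--Riemann upstairs and restriction.} For $\alpha,\beta\in H^{1,1}(Y;\RR)$ the projection formula gives
\[
Q_\eta(\alpha,\beta)=\int_F\rho^*\alpha\,\rho^*\beta\,\Theta .
\]
By the mixed Hodge--Riemann theorem of Gromov and Dinh--Nguyen, the form $(a,b)\mapsto\int_F ab\,\Theta$ on $H^{1,1}(F;\RR)$ has exactly one positive eigenvalue. Its restriction to the subspace $\rho^*H^{1,1}(Y;\RR)$, on which $\rho^*$ is injective, therefore has at most one positive eigenvalue. It does have one, since $Q_\eta(h,h)>0$ by positivity of ample Schur classes. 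Upgrading from "at most one positive eigenvalue" to full nondegeneracy (the strict Hodge--Riemann cone) requires the finer analysis in \cite{RT}; for our purposes the closure statement suffices.

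\emph{Main obstacle.} The hard step is Step 2: the Schur class must be a pushforward of a product of \emph{positive} divisor classes, not merely a signed combination. Signed combinations would destroy the eigenvalue count. I would first try the Fulton--Lazarsfeld cone construction, which expresses $s_\mu(E)$ through Segre-type classes of $\OO(1)$ on a Grassmann or flag bundle of quotients of $E$. If positivity fails on the nose there, I would fall back on a degeneration/limit argument in the spirit of Step 1.
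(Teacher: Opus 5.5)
The paper gives no proof of this statement. It imports \cite[Theorem 7.4]{RT} as a black box, so your argument has to stand on its own. Steps 1 and 3 are sound: having at most one positive eigenvalue is a closed condition, and pulling a form back along $\rho^*$ cannot raise its positive index.

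The gap is Step 2, which you correctly identify as the crux. As formulated, it fails.
\begin{itemize}
\item The Gysin formula on the full flag bundle does write $s_\mu(E)$ as $\rho_*$ of a monomial in the Chern classes $x_1,\dots,x_e$ of the tautological subquotient line bundles. But only $x_1$ is nef. Each $x_i$ with $i\ge2$ has degree $-1$ on a suitable line in a fibre; the fibrewise nef cone is spanned by the partial sums $x_1+\dots+x_i$.
\item Adding a relatively ample class changes the integrand, and therefore changes the pushforward. Pullbacks from $Y$ cannot repair negativity along the fibres.
\item The obstruction is real already in rank two. There the flag bundle is $\mathbb P(E)$, and the natural classes that are nef for every nef $E$ are $a\xi+b\,\rho^*c_1(E)$ with $a,b\ge0$. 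Since $\rho_*\xi=1$, $\rho_*\xi^2=c_1(E)$ and $\rho_*\xi^3=s_{(1,1)}(E)=c_1(E)^2-c_2(E)$, every pushforward of a cubic monomial in such classes equals $u\,c_1(E)^2+v\,s_{(1,1)}(E)$ with $u,v\ge0$. The same sign pattern persists on every fibre power of $\mathbb P(E)$.
\item Hence $c_2(E)=s_{(2)}(E)$ is never produced this way. Top Chern classes of this kind (e.g.\ $\lambda_g$) are exactly what the paper feeds into the theorem.
\end{itemize}

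The missing idea is a realization of $s_\mu(E)$, and of products of such classes, in which positivity is carried by a cycle rather than by a product of divisors. One would then have to prove that the Hodge--Riemann property, at least in the limit, survives that realization. Ross--Toma's proof must supply some such mechanism, and that is where the real content of the theorem lies. Your fallback to Fulton--Lazarsfeld cones names a candidate, but a cone class is a top Chern class of a nef bundle on a singular variety, not a power of a divisor. So it does not plug directly into your Step 3, and the proposal does not carry it out.

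Note also that even granting Step 2, you would obtain only the final clause, that $Q_\eta$ has at most one positive eigenvalue. Step 1 reduces the closure assertion to the \emph{strict} Hodge--Riemann property for the ample perturbations, and Step 3 explicitly does not give nondegeneracy. So your remark that ``the closure statement suffices'' conflates two different assertions. For the paper only the eigenvalue bound matters, via Lemma~\ref{lem:linear-algebra} and \eqref{eq:mixed-HI}. But that bound too rests on the missing Step 2.
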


The theorem is about a
\emph{product of Schur classes}; one cannot replace this hypothesis
by an arbitrary sum of individually positive classes. The empty
product in dimension two is covered by the classical Hodge-index
theorem.

The following is the standard linear-algebra argument underlying
the Hodge-index inequality; it is the argument used in
\cite[Theorem 10.1]{RT}.
We reproduce it, including the degenerate case.

\begin{lemma}[The Hodge-index inequality]\label{lem:linear-algebra}
Let $Q$ be a real symmetric bilinear form with at most one positive
eigenvalue. If $Q(b,b)\ge0$, then
\begin{equation}\label{eq:reverse-CS}
  Q(a,a)Q(b,b)\le Q(a,b)^2
\end{equation}
for every $a$.
\end{lemma}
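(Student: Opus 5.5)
The plan is to restrict $Q$ to the two-dimensional span of $a$ and $b$ and reduce \eqref{eq:reverse-CS} to a sign condition on a $2\times2$ determinant. Two cases are trivial. If $Q(b,b)=0$, the left side is $0$ and the right side is a square, so there is nothing to prove. If $a$ and $b$ are linearly dependent, \eqref{eq:reverse-CS} is an equality. So we may assume $Q(b,b)>0$ and that $a,b$ are independent.

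In that case, consider the Gram matrix
\[
  G=\begin{pmatrix}Q(a,a)&Q(a,b)\\ Q(a,b)&Q(b,b)\end{pmatrix}
\]
of $Q$ restricted to $W=\mathrm{span}(a,b)$. By Sylvester's law of inertia, or equivalently by the min--max characterization of eigenvalues, the restriction of a form with at most one positive eigenvalue to any subspace also has at most one positive eigenvalue. The argument is short. If $Q|_W$ had two positive eigenvalues, there would be a two-dimensional subspace on which $Q$ is positive definite. That subspace would meet the codimension-one nonpositive part of $Q$, namely the span of the eigenvectors with eigenvalue $\le0$, nontrivially. This is a contradiction.

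Since $Q(b,b)>0$, the form $Q|_W$ has at least one positive eigenvalue, hence exactly one. Its other eigenvalue is therefore $\le0$, so $\det G\le0$. That inequality is precisely $Q(a,a)Q(b,b)-Q(a,b)^2\le0$.

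An equivalent elementary route avoids eigenvalues on $W$ altogether. Set $c=a-tb$ with $t=Q(a,b)/Q(b,b)$, so that $Q(c,b)=0$. If $Q(c,c)>0$, then $Q$ would be positive definite on $\mathrm{span}(b,c)$, contradicting the eigenvalue bound as above. Hence $Q(c,c)\le0$. Expanding gives $Q(c,c)=Q(a,a)-Q(a,b)^2/Q(b,b)$, and multiplying by $Q(b,b)>0$ gives the claim.

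The only step that needs any care is the inertia argument, i.e.\ that ``at most one positive eigenvalue'' passes to subspaces. The degenerate case $Q(b,b)=0$ is handled separately at the start, as noted above.
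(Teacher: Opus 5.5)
Your proof is correct and is essentially the paper's argument. The paper also dismisses $Q(b,b)=0$ directly, then uses $a_0=a-\frac{Q(a,b)}{Q(b,b)}b$ with $Q(a_0,b)=0$ and rules out $Q(a_0,a_0)>0$ via positive definiteness on $\mathrm{span}(a_0,b)$; this is exactly your ``elementary route'', and your Gram-determinant version is only a repackaging of the same restriction-to-a-plane idea.
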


\begin{proof}
If $Q(b,b)=0$, the left side is zero, so the assertion is immediate.
Otherwise let
\[
  a_0=a-\frac{Q(a,b)}{Q(b,b)}b.
\]
Then $Q(a_0,b)=0$. If $Q(a_0,a_0)>0$, the restriction to the span
of $a_0,b$ would be positive definite, contradicting the bound on
the positive index of $Q$. Hence $Q(a_0,a_0)\le0$. Expanding this
inequality and multiplying by the positive number $Q(b,b)$ gives
\eqref{eq:reverse-CS}. The argument does not require nondegeneracy.
\end{proof}

Combining Theorems~\ref{thm:schur-positive} and \ref{thm:RT} with
Lemma~\ref{lem:linear-algebra} gives, for nef $A,B$,
\begin{equation}\label{eq:mixed-HI}
  \left(\int_Y A^2\eta\right)\left(\int_Y B^2\eta\right)
  \le\left(\int_Y AB\eta\right)^2.
\end{equation}
Indeed, numerical Schur positivity verifies the required nonnegative
$B$-square. This is the instance of \cite[Theorem 10.1]{RT} used below.

\section{The symmetric balancing lemma}
\label{sec:balancing}

The next lemma is a deduction proved here from
\cite[Theorems 7.4 and 10.1]{RT} and the classical identity in
Lemma~\ref{lem:rank-two}. For the pure-divisor,
strictly positive setting, the symmetric log-concavity approach
appears in \cite[Sections 2.1 and 3]{Schmitt}.

\begin{lemma}\label{lem:balancing}
Let $Y$ be a smooth projective $N$-fold. Let $A,B$ be first Chern
classes of nef line bundles, and let $\Omega$ be a product of Schur
classes of nef bundles. Suppose
\[
  \codim\Omega=N-s,\qquad
  \int_Y A^iB^{s-i}\Omega
    =\int_Y A^{s-i}B^i\Omega\quad(0\le i\le s).
\]
Then for nonnegative integers $p<q$ with $p+q=s$,
\begin{equation}\label{eq:balancing}
  \int_Y A^pB^q\Omega
    \le \int_Y A^{p+1}B^{q-1}\Omega.
\end{equation}
\end{lemma}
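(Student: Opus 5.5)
Set $x_i=\int_Y A^iB^{s-i}\Omega$ for $0\le i\le s$. The hypothesis says $x_i=x_{s-i}$, and the claim is $x_p\le x_{p+1}$ for $p<q=s-p$, i.e.\ for $p<s/2$. The plan is to show that the difference $x_{p+1}-x_p$ can be written as a nonnegative multiple of a Hodge--Riemann-type quantity, using Lemma~\ref{lem:rank-two} to package the relevant polynomial in $A,B$ as a Schur class.

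The natural identity comes from \eqref{eq:key-polynomial} with $r=s-2$:
\[
\int_Y (A-B)^2h_{s-2}(A,B)\,\Omega = x_s+x_0-x_{s-1}-x_1 = 2(x_0-x_1)
\]
by the symmetry. To reach a general index $p$, I would instead use monomial prefactors. Write
\[
A^pB^{p}(A-B)^2h_{r}(A,B) = A^pB^p\bigl(A^{r+2}+B^{r+2}-A^{r+1}B-AB^{r+1}\bigr)
\]
with $r=s-2p-2\ge0$, which is valid since $p<q$ gives $s-2p\ge1$. The case $s-2p=1$ is the equality case $q=p+1$, where $x_p=x_{p+1}$ by symmetry directly. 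Integrating against $\Omega$ gives
\[
x_{s-p}+x_p-x_{s-p-1}-x_{p+1} = 2(x_p-x_{p+1}).
\]
So it suffices to prove
\[
\int_Y (A-B)^2\,\eta\le 0,\qquad \eta=A^pB^p\,s_{(1^r)}(L_A\oplus L_B)\,\Omega,
\]
where $\codim\eta=2p+r+(N-s)=N-2$.

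Next, $\eta$ is a product of Schur classes of nef bundles. The powers of $A$ and $B$ are $s_{(1)}$ of nef line bundles, $L_A\oplus L_B$ is nef as a direct sum of nef bundles, and $\Omega$ is such a product by assumption. Theorem~\ref{thm:RT} then shows that $Q_\eta$ has at most one positive eigenvalue, and Theorem~\ref{thm:schur-positive} gives $Q_\eta(A,A)\ge0$. Apply Lemma~\ref{lem:linear-algebra} and the symmetry of the integrals. Here $Q_\eta(A,A)=\int A^2\eta$ and $Q_\eta(B,B)=\int B^2\eta$; expanding $h_r$, these are sums of the $x_i$ with index sets mirrored under $i\mapsto s-i$, so they are equal. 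Call the common value $\alpha\ge0$, and let $\beta=Q_\eta(A,B)\ge0$, which is also nonnegative by Theorem~\ref{thm:schur-positive}. Then \eqref{eq:mixed-HI} gives $\alpha^2\le\beta^2$, hence $\alpha\le\beta$. Therefore $Q_\eta(A-B,A-B)=2\alpha-2\beta\le0$, which yields $x_p\le x_{p+1}$.

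The main obstacle is checking that the symmetry hypothesis really forces $\int A^2\eta=\int B^2\eta$. This requires $\eta$ itself to be symmetric in $A\leftrightarrow B$ apart from $\Omega$, which holds because both $A^pB^p$ and $h_r$ are symmetric. A secondary point is that $N\ge2$ is needed for Theorem~\ref{thm:RT}. If $s\le1$ the statement is vacuous or trivial, and $s\ge2$ forces $N\ge2$.
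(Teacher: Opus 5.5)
Your proposal is correct and follows essentially the same route as the paper. Both use $\eta=(AB)^p h_{q-p-2}(A,B)\,\Omega$ as a product of Schur classes of nef bundles, the symmetry $\int A^2\eta=\int B^2\eta$, Schur positivity together with the Ross--Toma Hodge-index inequality to get $\int(A-B)^2\eta\le0$, and the identity \eqref{eq:key-polynomial} to rewrite this integral as $2(x_p-x_{p+1})$.
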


\begin{proof}
For $q=p+1$, the two expressions coincide by the assumed symmetry.
Otherwise set $r=q-p-2\ge0$ and define
\begin{equation}\label{eq:eta}
  \eta=(AB)^p h_r(A,B)\Omega.
\end{equation}
By Lemma~\ref{lem:rank-two}, $h_r(A,B)$ is a Schur class of the
nef bundle $L_A\oplus L_B$. Thus $\eta$ is a product of Schur
classes of nef bundles, not merely an unstructured sum of classes.
Its codimension is
\[
  2p+r+(N-s)=2p+(q-p-2)+(N-p-q)=N-2.
\]

The polynomial $(AB)^p h_r(A,B)$ is symmetric in $A,B$.
The numerical symmetry in the hypothesis therefore implies
\[
  u:=\int_Y A^2\eta=\int_Y B^2\eta.
\]
Put $v=\int_Y AB\eta$. Theorem~\ref{thm:schur-positive} gives
$u,v\ge0$, and \eqref{eq:mixed-HI} gives $u^2\le v^2$.
It follows that $u\le v$, even if one or both are zero. Hence
\begin{equation}\label{eq:negative-square}
  \int_Y(A-B)^2\eta=2u-2v\le0.
\end{equation}

Multiply \eqref{eq:key-polynomial} by $(AB)^p\Omega$.
Since $p+r+2=q$, integration gives
\begin{align}
  \int_Y(A-B)^2\eta
  & =\int_Y
     \bigl(A^qB^p+A^pB^q-A^{q-1}B^{p+1}
                   -A^{p+1}B^{q-1}\bigr)\Omega \notag\\
  & =2\left(\int_Y A^pB^q\Omega
          -\int_Y A^{p+1}B^{q-1}\Omega\right).
     \label{eq:difference-identity}
\end{align}
In the second line, the first pair of integrals and the second pair
are equal by symmetry. Equations \eqref{eq:negative-square} and
\eqref{eq:difference-identity} prove the result.
\end{proof}

\begin{remark}\label{rem:zeros}
The proof never divides by an intersection number. It therefore
handles arbitrary vanishing fixed weights, including products of
Hodge classes that vanish. This avoids a possible gap in a proof
that argues from nonnegative symmetric log-concavity alone:
an abstract nonnegative log-concave sequence can have internal zeros.
No regularization or strict-positivity assertion is needed here.
\end{remark}

\begin{remark}\label{rem:numerical-symmetry}
Only equality of numerical intersections is needed on $Y$.
An automorphism interchanging $A$ and $B$ on a downstairs space is
sufficient if all intersections upstairs are one common positive
multiple of those downstairs. The smooth projective model itself
need not carry a lifting of the automorphism.
\end{remark}

\section{Nef bundles on the compactified moduli space}
\label{sec:nef-moduli}

\subsection{Cotangent lines and the universal logarithmic line}
\label{subsec:log-line}

We use the standard nefness theorem for cotangent line bundles on
$\Mbar_{g,n}$; see \cite[Chapter XIV, Corollary 5.14]{ACG}.

The universal-curve identification and logarithmic kappa
convention are standard; see \cite{AC,Knudsen} and
\cite[Chapter XIV]{ACG}. The following lemma records the
associated line-bundle identity, together with its usual local
proof; nefness uses the cited cotangent-line theorem.

\begin{lemma}[Standard universal-curve identity]\label{lem:log-line}
Under $C\simeq\Mbar_{g,n+1}$, the line bundle
$\omega_\pi(\sum_i D_i)$ is the cotangent line bundle at the last
marking. In particular, $\ell=\psi_{n+1}$ is nef, and
\begin{equation}\label{eq:kappa-zero}
  \kappa_0=2g-2+n.
\end{equation}
\end{lemma}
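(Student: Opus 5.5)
The plan is to work entirely with the standard local description of the forgetful map $f:\Mbar_{g,n+1}\to\Mbar_{g,n}$, which is the universal curve $\pi$. We then compare $\omega_\pi(\sum_i D_i)$ with $\mathcal L_{n+1}$ away from codimension two, and use normality of the stack to extend the isomorphism.

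The first step is to identify the sections. Under Knudsen's isomorphism $C\simeq\Mbar_{g,n+1}$, the section $D_i$ is the boundary divisor $\Delta_{0,\{i,n+1\}}$. Its points are curves where $i$ and $n+1$ lie on a contracted rational bubble with a single node.

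The second step compares the line bundles on the open locus $U$ where the point $n+1$ is a smooth non-marked point of the fiber. There the fiber of $\mathcal L_{n+1}$ is $T^*_{x}C_s$, which equals the fiber of $\omega_\pi$. This identification is canonical and works in families, so $\mathcal L_{n+1}|_U\cong\omega_\pi|_U$.

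The third step is a local computation along $D_i$. In a neighbourhood of a generic point of $D_i$, take a local coordinate $z$ on the fiber with the $i$th section at $z=0$. The stabilized curve near $D_i$ is the blowup picture. A cotangent vector at the new point $n+1$ on the bubble corresponds to $dz/z$ up to the standard normalization, so the cotangent line at $n+1$ is identified with $\omega_\pi(D_i)$ near $D_i$. Concretely, the canonical section $dz\mapsto$ (cotangent at $n+1$) vanishes to order one along $D_i$, which gives $\mathcal L_{n+1}\cong\omega_\pi(D_i)$ there.

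The remaining points of $C$ lie over nodes of fibers. The locus where $n+1$ sits at a node, as a codimension-two stratum, has codimension two in $C$. Since $C$ is normal (smooth as a stack), a line-bundle isomorphism defined off codimension two extends. This gives $\mathcal L_{n+1}\cong\omega_\pi(\sum D_i)$, hence $\ell=\psi_{n+1}$.

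The main obstacle is making the local computation along $D_i$ rigorous, that is, pinning down the precise order of vanishing. If needed, I will instead cite the standard formula $\psi_{n+1}=f^*$-free comparison $\omega_\pi=\psi_{n+1}-\sum_i D_i$ from \cite{AC} or \cite[Chapter XIV]{ACG}, or the known comparison $\psi_i=f^*\psi_i+D_{i}$ applied symmetrically.

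Nefness of $\ell$ then follows from the cited nefness of cotangent lines on $\Mbar_{g,n+1}$ \cite[Chapter XIV, Corollary 5.14]{ACG}.

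For \eqref{eq:kappa-zero}, $\kappa_0=\pi_*(\ell)$ is the fiberwise degree of $\omega_\pi(\sum D_i)$. This is $2g-2$ from $\omega_\pi$ plus $1$ for each section, giving $2g-2+n$.
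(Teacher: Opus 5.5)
Your proposal is correct and follows essentially the same route as the paper: identify $\mathcal L_{n+1}$ with $\omega_\pi$ where the moving point is smooth and unmarked, show by a local computation that the comparison acquires a simple pole along each $D_i$, extend across the codimension-two nodal locus using smoothness of the stack, cite nefness of cotangent lines, and read off $\kappa_0$ as the fiber degree $2g-2+n$. The paper makes your vaguer ``$dz/z$'' step explicit with the bubble coordinate $v=z/u$ (old marking at $v=0$, new marking at $v=1$), so that the relative cotangent frame at the new marking is $dv=dz/u$. This is exactly the order-one vanishing of $dz\mapsto u\,dv$ along $u=0$ that you assert, so your fallback citation is not needed.
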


\begin{proof}
A point of the universal curve is an original stable marked curve
together with a point of that curve; the point is allowed to lie
at a marking or a node. The identification with $\Mbar_{g,n+1}$
is obtained by inserting a rational component when necessary so
that the new marking is distinct and smooth.

Where the moving point is smooth and different from the existing
markings, its cotangent line is the fiber of $\omega_\pi$.
When it approaches the $i$th marking, the stabilization inserts
a rational component carrying the two markings. To calculate the
correction, choose a coordinate $z$ with the old marking at $z=0$
and write the moving marking as $z=u$. On the bubble use $v=z/u$:
the old marking is $v=0$, the new marking is $v=1$, and the attaching
node is $v=\infty$. The relative cotangent frame at the new marking
is $dv=dz/u$. Thus it is the old dualizing frame with one simple
pole along $u=0$, giving exactly the factor $\OO_C(D_i)$.
The collision divisors $D_i$ are disjoint. The locus of points
that are nodes of the fibers has
codimension two in the total universal curve. Since
$C\simeq\Mbar_{g,n+1}$ is smooth as a stack, an isomorphism of line
bundles away from a codimension-two locus extends uniquely. These
observations give
\[
  \mathcal L_{n+1}\simeq\omega_\pi(D_1+\cdots+D_n).
\]
Nefness follows from the cotangent-line theorem. Finally, the degree
of this logarithmic dualizing line on every complete fiber of
$\pi$ is $2g-2+n$. Its pushforward in codimension zero is therefore
the scalar in \eqref{eq:kappa-zero}.
\end{proof}

\subsection{Nefness of the full Hodge bundle, including the boundary}

We recall the semipositivity input in the form needed here.
Its original source over a curve is Fujita
\cite[Main Theorem (0.6)]{Fujita}; the extension to
higher-dimensional bases with unipotent local monodromy is
Kawamata \cite[Theorem 5]{Kawamata}.

\begin{theorem}[Fujita--Kawamata semipositivity]\label{thm:semipositivity}
For a generically smooth semistable family of connected curves over
a smooth projective curve, the direct image of the relative dualizing
sheaf is a nef vector bundle. The same conclusion holds for a generically smooth
stable family, after identifying the Hodge direct image with that
of a semistable resolution.
\end{theorem}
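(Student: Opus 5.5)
The plan is to take the curve case directly from Fujita and reduce the stable case to it. The one real point is that the Hodge direct image does not change when the total space of a stable family is resolved.

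\emph{Semistable case.} Let $f:S\to B$ be a generically smooth semistable family of connected curves over a smooth projective curve $B$, with $S$ a smooth surface. Then $f_*\omega_{S/B}$ is locally free of rank $g$: it is torsion free over a smooth curve, and its fibers have constant dimension $h^0(\omega_{S_b})=g$ because the fibers are reduced, connected and Gorenstein. Fujita's theorem \cite[Main Theorem (0.6)]{Fujita} says exactly that this bundle is semipositive. Since $B$ is a curve, semipositive means nef: every quotient line bundle of $f_*\omega_{S/B}$ has nonnegative degree, which is nefness of $\OO_{\mathbb P(f_*\omega_{S/B})}(1)$ tested on curves. Kawamata \cite[Theorem 5]{Kawamata} is needed only to move to higher-dimensional bases. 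Below we apply nefness by pullback to complete test curves, so only this one-dimensional statement is used.

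\emph{Stable case.} Now let $f:S\to B$ be a generically smooth stable family over a smooth curve.
\begin{enumerate}
\item \emph{Local structure.} At a node of a fiber, $S$ is étale-locally $xy=t^{k}$. So $S$ is normal with at worst $A_{k-1}$ singularities.
\item \emph{Resolution.} Let $\rho:S'\to S$ be the minimal resolution. Each singular point is replaced by a chain of $k-1$ smooth rational $(-2)$-curves. Hence $f'=f\circ\rho:S'\to B$ is semistable: its fibers are reduced nodal curves, and the only new components are rational curves meeting the rest of the fiber in two points.
\item \emph{Crepancy.} Rational double points are canonical and the minimal resolution is crepant, so $\rho^*\omega_{S/B}\simeq\omega_{S'/B}$.
\item \emph{Projection formula.} Since $S$ is normal with rational singularities, $\rho_*\OO_{S'}=\OO_S$. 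Therefore
\[
  f'_*\omega_{S'/B}=f_*\rho_*\rho^*\omega_{S/B}=f_*\omega_{S/B}.
\]
\end{enumerate}
This is the identification stated in the theorem, and nefness then follows from the semistable case.

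\emph{Main obstacle.} The delicate step is the crepancy and projection-formula comparison at the singular points of $S$. I would verify it in the local model $xy=t^k$: there the dualizing frame $dx/x$ (equivalently the residue description of $\omega_\pi$) pulls back without zeros or poles along the exceptional chain. The same local check shows that no base change is needed, unlike the situation for general stable reduction. If a family arrives instead with non-reduced fibers, I would first apply semistable reduction after a finite base change $B'\to B$. Nefness descends along finite surjective maps of curves, because degrees multiply by $\deg(B'/B)$. The comparison of direct images under such a base change is where Kawamata's unipotent-monodromy hypothesis would enter.
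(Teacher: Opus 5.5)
Your proposal is correct and takes essentially the same route as the paper. You quote Fujita for the semistable case, then use the local model $xy=t^k$ at smoothed nodes: its rational double points have a crepant minimal resolution with reduced nodal fibers, so $\rho_*\OO_{S'}=\OO_S$ and the projection formula leave $f_*\omega_{S/B}$ unchanged. Your remark that no base change is needed sharpens the paper's ``if necessary'' clause.

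One wording fix: on a curve, nefness means that every quotient bundle of any rank has nonnegative degree, or equivalently that every quotient line bundle after every finite cover $\tilde B\to B$ does. Quotient line bundles on $B$ alone do not suffice; a stable rank-two bundle of degree $-1$ is a counterexample. The stronger form is what Fujita's semipositivity actually provides.
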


This is the curve case of Fujita--Kawamata semipositivity. A statement
with smooth total space, simple normal crossing degeneracy, and
unipotent local monodromies is \cite[Theorem 1.1]{CK}; it includes
the semistable curve situation. At a smoothing node of a stable
family over a nonsingular base curve, the local total space has
equation $xy=t^a$ for some $a\ge1$, up to units and etale localization.
Its singularities are rational double points. Resolving them, and
if necessary making a finite base change followed by semistable
resolution, does not change the corresponding Hodge direct image:
the pushforward of the resolved dualizing sheaf is the original
dualizing sheaf. The nodal monodromies are unipotent. These facts
justify the stable-family formulation above, and nefness descends
from a finite surjective base change.

The following are standard permanence properties of nef vector
bundles; see \cite[Chapter 6]{Lazarsfeld}. The proof below gives
the curve-test arguments needed here, so no new positivity
theorem is being asserted.

\begin{lemma}[Standard nefness properties]\label{lem:nef-extension}
Extensions and direct sums of nef vector bundles are nef. If a
vector bundle becomes nef after a finite surjective map of smooth
projective curves, then it was nef before that map.
\end{lemma}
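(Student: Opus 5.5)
We argue throughout with the definition of nefness by test curves. A vector bundle $E$ on a projective variety is nef exactly when, for every morphism $f:T\to Y$ from a smooth projective curve, every quotient line bundle of $f^*E$ has nonnegative degree. Equivalently, $\OO_{\mathbb P(f^*E)}(1)$ is nef for every such $f$. This reduction holds because every curve in $\mathbb P(E)$ maps either to a point or to a curve in $Y$. Normalizing that image curve gives a smooth $T$, and the curve lifts to $\mathbb P(f^*E)$. Both $\mathbb P(E)$ and nefness are compatible with pullback. So each assertion reduces to statements about bundles on a smooth projective curve $T$.

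\textbf{Extensions.} Let $0\to E'\to E\to E''\to0$ be exact with $E',E''$ nef, and let $E|_T\to M$ be a line-bundle quotient on a test curve $T$.
\begin{enumerate}
\item If the composite $E'|_T\to M$ is nonzero, its image is a subsheaf $M'\subseteq M$. On a smooth curve, $M'$ is a line bundle with $\deg M'\le\deg M$. The sheaf $M'$ is a quotient of $E'|_T$, so $\deg M'\ge0$: nefness of $E'$ gives nonnegative degree on every torsion-free rank-one quotient, since on a curve these are line bundles.
\item If the composite is zero, then $M$ is a quotient of $E''|_T$, so $\deg M\ge0$ by nefness of $E''$.
\end{enumerate}
In either case $\deg M\ge0$. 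A direct sum is the split extension, so it is covered by the same argument.

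\textbf{Finite base change.} Let $\phi:T'\to T$ be finite surjective between smooth projective curves, with $\phi^*E$ nef. Any line-bundle quotient $E\to M$ pulls back to a quotient $\phi^*E\to\phi^*M$. Then $\deg\phi^*M=\deg(\phi)\deg M\ge0$, and since $\deg\phi>0$ we get $\deg M\ge0$.

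The only point needing care is the first case of the extension argument. There the image of $E'$ in $M$ need not be saturated. We handle this by the inequality $\deg M'\le\deg M$ for subsheaves of a line bundle on a smooth curve. For the stack setting, we apply these curve tests after pulling back to complete test curves mapping to $\Mbar_{g,n}$, or to the scheme covers used later, as the paper stipulates.
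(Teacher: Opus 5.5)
Your extension and direct-sum argument is the paper's own: the same case split on whether $E'|_T\to M$ vanishes, with a nonzero image being a line subsheaf of $M$ of no larger degree. The descent step, however, has a genuine gap.

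With $E$ on $T$ and $\phi^*E$ nef, you only check that quotient line bundles of $E$ \emph{itself} have nonnegative degree. By the criterion you state at the outset, nefness of $E$ on $T$ requires this for $g^*E$ for every morphism $g\colon S\to T$ from a smooth projective curve, including all finite covers of $T$. The condition you verify is strictly weaker. Take a stable rank-two bundle $E$ of degree $-1$ on an elliptic curve. Every quotient line bundle has degree $>\mu(E)=-\tfrac12$, hence $\ge0$. Yet $E$ is not nef, because $c_1(\OO_{\mathbb P(E)}(1))^2=\deg E<0$ on the ruled surface $\mathbb P(E)$. Since a given $g$ need not factor through $\phi$, nefness of $\phi^*E$ says nothing directly about quotients of $g^*E$.

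The missing idea is the lifting step that the paper supplies. Given $g\colon S\to T$, choose an irreducible component of $S\times_T T'$ dominating $S$. One exists because the projection to $S$ is finite and surjective. Normalizing it gives a smooth projective curve $S'$ with $\phi'\colon S'\to S$ finite surjective and $g'\colon S'\to T'$ satisfying $\phi\circ g'=g\circ\phi'$. A quotient $g^*E\to M$ then pulls back to a quotient of ${\phi'}^*g^*E={g'}^*\phi^*E$, which is nef. Hence $\deg(\phi')\deg M\ge0$, and so $\deg M\ge0$. Equivalently, $\mathbb P(\phi^*E)\to\mathbb P(E)$ is finite surjective and pulls $\OO(1)$ back to $\OO(1)$, and nefness of a line bundle descends along such a map by lifting curves.

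Your weaker statement would in fact suffice for Proposition~\ref{prop:hodge-nef}, where $B$ already ranges over all test curves and the finite cover is chosen separately for each. For the lemma as stated, though, the lifting is needed. This lifting step, not the saturation issue in the extension case, is the point that requires care.
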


\begin{proof}
For the extension assertion, test a sequence
$0\to F\to E\to G\to0$ on a smooth projective curve after every
curve pullback, and let $E\twoheadrightarrow Q$ be a quotient line
bundle. If $F\to Q$ is zero, then $Q$ is a quotient of $G$, so
$\deg Q\ge0$. Otherwise its image is $Q(-T)$ for an effective
divisor $T$; this image is a quotient line bundle of $F$.
Thus $\deg Q(-T)\ge0$, and consequently $\deg Q\ge0$.
The quotient-line criterion for nefness proves the result.
A direct sum is a split extension. For descent, given any test
curve downstairs, take a component of its fiber product with the
finite cover that dominates it, and normalize that component.
The pullback of a proposed quotient line has degree equal to its
original degree times the positive cover degree. Nonnegativity
upstairs therefore implies nonnegativity downstairs.
\end{proof}

The next proposition is a standard consequence of
Fujita--Kawamata semipositivity
\cite[Main Theorem (0.6)]{Fujita},
\cite[Theorem 5]{Kawamata}, rather than a new nefness result.
We include the boundary reduction to explain why it applies
to the full compactification; the clutching constructions are
those of \cite{ACG,Knudsen}.

\begin{proposition}[Nefness of the Hodge bundle]\label{prop:hodge-nef}
The Hodge vector bundle $\EE$ is nef on the entire stack
$\Mbar_{g,n}$.
\end{proposition}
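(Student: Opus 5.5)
By definition, nefness of $\EE$ on the stack is tested on complete curves. Let $f:T\to\Mbar_{g,n}$ be a nonconstant map from a smooth projective curve. The plan is to show that $f^*\EE$ is nef on $T$. By the descent part of Lemma~\ref{lem:nef-extension}, we may replace $T$ by any finite surjective cover. So we first pass to a cover on which the map lifts to an honest family: a stable $n$-pointed curve $\pi_T:\mathcal C_T\to T$ with $f^*\EE=\pi_{T*}\omega_{\mathcal C_T/T}$. Forgetting the markings does not change the Hodge bundle, since $\EE$ is pulled back along the forgetful/stabilization maps. We may therefore ignore the sections, at the cost of working with prestable but possibly nonstable families; this is harmless because the Hodge bundle is unchanged under contracting rational components.

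If the generic fiber of $\mathcal C_T\to T$ is smooth, Theorem~\ref{thm:semipositivity} applies directly and gives nefness of $f^*\EE$. The real work is the case where $f(T)$ lies entirely inside the boundary. Then every fiber is singular, and the family is not generically smooth.

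Here I would normalize the total family along the generic nodes. After a further finite base change of $T$ (to kill monodromy permuting components and branches), the generic fiber has a well-defined dual graph $\Gamma$. The partial normalization $\widetilde{\mathcal C}_T\to\mathcal C_T$ along the nodes persisting over all of $T$ then splits into families $\mathcal C_v\to T$, one for each vertex $v$ of $\Gamma$. Each $\mathcal C_v$ has generically smooth fibers of genus $g_v$, marked by the half-edges. This is exactly the clutching description of the boundary stratum containing $f(T)$.

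The standard residue sequence for the dualizing sheaf along these nodes gives, after pushforward, an exact sequence
\[
0\to\bigoplus_v \pi_{v*}\omega_{\mathcal C_v/T}\to f^*\EE\to \OO_T^{\,h^1(\Gamma)}\to 0 .
\]
The quotient is trivial, a sum of residue line bundles. Each residue line is canonically trivialized up to sign, and the signs are fixed after the base change. Each $\pi_{v*}\omega_{\mathcal C_v/T}$ is nef by Theorem~\ref{thm:semipositivity}, because each $\mathcal C_v$ is generically smooth. Vertices of genus $0$ contribute zero. The trivial bundle is nef. Lemma~\ref{lem:nef-extension} then yields nefness of the extension $f^*\EE$, and descent returns us to the original $T$.

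The main obstacle is the boundary step: justifying the exact sequence in families, including the surjectivity of the residue map and the triviality of the residue quotient. The cleanest check is fiberwise. On each fiber, $h^0$ of the dualizing sheaf is $g=\sum_v g_v+h^1(\Gamma)$, so by cohomology and base change all three terms are vector bundles and exactness holds. Some nodes persisting generically may acquire additional degenerations at special points, but this only affects the $\mathcal C_v$, which are handled by Theorem~\ref{thm:semipositivity} in its stable-family form. A secondary point to verify is the lifting to a family over a finite cover of $T$ for the stack $\Mbar_{g,n}$. For this I would cite the existence of a finite flat cover by a scheme carrying a family, for example via level structures, and pull back to a component of the fiber product dominating $T$.
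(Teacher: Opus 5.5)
Your proposal is correct and follows essentially the same route as the paper: partial normalization along the generically persisting nodes (the paper phrases this as lifting through the clutching map $\xi_\Gamma$ after a finite cover), an extension of a trivial bundle of rank $h^1(\Gamma)$ by $\bigoplus_v\pi_{v*}\omega_{\mathcal C_v/T}$, semipositivity for each vertex family, and Lemma~\ref{lem:nef-extension} together with descent. The only difference is that you push forward the residue sequence of $\omega$ directly, whereas the paper takes $R^1f_*$ of the structure-sheaf normalization sequence and then dualizes; note that the quotient is the constant subbundle $H_1(\Gamma,\CC)\otimes\OO_T$ of $\bigoplus_e\OO_T$, cut out by the residue theorem on each $\mathcal C_v$, rather than the sum of residue lines over all edges, and it is this description that makes it trivial.
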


\begin{proof}
It suffices to test pullbacks to smooth projective curves, permitting
finite covers of those curves. A test curve whose generic point
parametrizes a smooth curve is handled by
Theorem~\ref{thm:semipositivity}. We treat test curves whose generic
point is in the boundary.

Let $B\to\Mbar_{g,n}$ be such a test curve and let $\Gamma$ be the
dual graph of its generic stable curve. The proper clutching map
\[
  \xi_\Gamma:
  \prod_{v\in V(\Gamma)}\Mbar_{g_v,n_v}\longrightarrow\Mbar_{g,n}
\]
has image the closure of the corresponding boundary stratum.
After a finite cover of $B$, we may lift the map to this product
and label its vertices and branches. Each vertex then determines
a family of connected stable curves $f_v:C_v\to B$ whose generic
fiber is smooth. Special fibers of $C_v$ may themselves acquire
nodes; this is allowed.

Let $f:C\to B$ be the family obtained by the prescribed gluing.
Normalize only the nodes corresponding to the edges of the fixed
graph $\Gamma$, so the partial normalization is the disjoint union
of the $C_v$. The structure-sheaf normalization sequence is
\begin{equation}\label{eq:normalization-O}
  0\longrightarrow\OO_C
   \longrightarrow\nu_*\OO_{\coprod_v C_v}
   \longrightarrow\bigoplus_{e\in E(\Gamma)}\OO_{B_e}
   \longrightarrow0,
\end{equation}
where $B_e\simeq B$ is the node section for edge $e$ and the last
map is the difference of the two branch values. Each component
family has connected fibers, so its degree-zero direct image is
$\OO_B$. The beginning of the pushed-forward sequence contains
the incidence complex
\[
  0\longrightarrow\OO_B\longrightarrow
  \OO_B^{V(\Gamma)}\longrightarrow\OO_B^{E(\Gamma)}.
\]
Its cokernel is the constant bundle
$H^1(\Gamma,\CC)\otimes\OO_B$. Since the node sections have no
higher direct images, \eqref{eq:normalization-O} gives an exact
sequence of vector bundles
\[
  0\longrightarrow H^1(\Gamma,\CC)\otimes\OO_B
   \longrightarrow R^1f_*\OO_C
   \longrightarrow\bigoplus_v R^1(f_v)_*\OO_{C_v}
   \longrightarrow0.
\]
Relative duality, followed by dualizing this sequence, yields
\begin{equation}\label{eq:hodge-boundary}
  0\longrightarrow\bigoplus_v(f_v)_*\omega_{f_v}
   \longrightarrow f_*\omega_f
   \longrightarrow H_1(\Gamma,\CC)\otimes\OO_B
   \longrightarrow0.
\end{equation}
This remains exact over special fibers where the component families
degenerate further: the construction normalizes only the fixed
gluing nodes, and each $C_v$ remains a connected family.

Every bundle $(f_v)_*\omega_{f_v}$ is nef by
Theorem~\ref{thm:semipositivity}, and the rightmost bundle of
\eqref{eq:hodge-boundary} is trivial. Lemma~\ref{lem:nef-extension}
now proves that $f_*\omega_f$ is nef and that this conclusion
descends under the finite cover of $B$. Thus every complete-curve
test is nonnegative, proving the proposition.
\end{proof}

\begin{remark}
Nefness of $\lambda_1=\det\EE$ alone would not suffice for this
proof. Proposition~\ref{prop:hodge-nef} concerns the vector bundle
itself, which is required for applying Schur positivity to arbitrary
$c_b(\EE)$ and their products. In genus zero the Hodge bundle has
rank zero; positive-degree Hodge factors then vanish and are handled
separately rather than by a positive-rank assertion.
\end{remark}

\section{Fiber powers and the exact kappa lifting formula}
\label{sec:fiber}

For an integer $m\ge0$, put
\[
  Y_m=\underbrace{C\times_X\cdots\times_X C}_{m\text{ factors}},
  \qquad f_m:Y_m\to X,
  \qquad \ell_j=\pr_j^*\ell.
\]
We set $Y_0=X$, with $f_0=\id$. The morphism $\pi$ is flat and
projective of relative dimension one, so $f_m$ is flat and
projective of relative dimension $m$. In particular,
\begin{equation}\label{eq:fiber-dimension}
  \dim Y_m=D+m.
\end{equation}

The next lemma is an application of standard flatness facts,
not an additional moduli-space theorem. The going-down
property used in its proof is \cite[Tag 00HS]{Stacks}.

\begin{lemma}[Flatness and generic integrality]\label{lem:fiber-integral}
The fiber power $Y_m$ has an irreducible, generically reduced
fundamental cycle. Its generic fiber over $X$ is geometrically
integral.
\end{lemma}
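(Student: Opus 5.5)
We argue by induction on $m$, using only that $\pi:C\to X$ is flat and proper of relative dimension one with geometrically integral generic fiber, and that $X=\Mbar_{g,n}$ is irreducible and smooth. For $m=0$ there is nothing to prove. For the inductive step, note that $Y_m=Y_{m-1}\times_X C$ and that the projection $Y_m\to Y_{m-1}$ is the base change of $\pi$. It is therefore flat, proper, of relative dimension one, and its fibers are connected nodal curves.

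\textbf{Step 1: generic fiber.} The generic fiber of $\pi$ over the generic point $\xi$ of $X$ is a smooth projective curve of genus $g$ over $k(\xi)$. Its geometric fiber is a smooth connected curve, so it is geometrically integral. A fiber product of geometrically integral varieties over a field is geometrically integral. Hence $(Y_m)_\xi=C_\xi\times_{k(\xi)}\cdots\times_{k(\xi)}C_\xi$ is geometrically integral, and in fact smooth over $k(\xi)$.

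\textbf{Step 2: irreducibility.} Since $f_m$ is flat and of finite presentation, it is open. By going-down for flat morphisms \cite[Tag 00HS]{Stacks}, every generic point of an irreducible component of $Y_m$ maps to the generic point $\xi$ of the irreducible space $X$; this should be checked on an \'etale atlas, or on the scheme cover, to pass to the stack. Consequently every irreducible component of $Y_m$ meets the generic fiber $(Y_m)_\xi$, and its generic point lies there. Since $(Y_m)_\xi$ is irreducible by Step 1, there is exactly one component. The same argument shows that $Y_m$ has no embedded components supported over proper closed subsets: associated points of a flat family over an integral (indeed smooth) base lie over associated points of the base and of the fibers. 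This is \cite[Tag 05DB]{Stacks}-type reasoning, but we only need the statement at generic points.

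\textbf{Step 3: generic reducedness.} The local ring of $Y_m$ at its unique generic point equals the local ring of $(Y_m)_\xi$ at its generic point. That fiber is reduced by Step 1, so the multiplicity of the fundamental cycle $[Y_m]$ is $1$. Alternatively, $Y_m$ is smooth over the dense open locus of smooth curves, and in fact is a local complete intersection everywhere, hence Cohen--Macaulay, since it is flat with lci fibers over a smooth base. Thus the fundamental cycle is irreducible and reduced generically.

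The main obstacle is Step 2, making going-down rigorous on a Deligne--Mumford stack. I would handle it by working on a finite flat scheme cover, or an \'etale atlas, of $X$ pulled back to $Y_m$, and invoking openness of flat morphisms there. Openness of $f_m$ alone already suffices: a component not meeting $f_m^{-1}(\xi)$ would contain a nonempty open subset of $Y_m$ whose image is open and avoids $\xi$, which is impossible.
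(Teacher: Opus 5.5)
Your proposal is correct and follows essentially the same route as the paper: the generic fiber is geometrically integral as a product of geometrically integral curves, flatness (via going-down or openness) forces every irreducible component to dominate $X$ and hence be unique, and generic reducedness comes from the generic fiber or from smoothness over the locus of smooth curves. Your extra observations are valid refinements the paper does not need. One is that the local ring at the generic point of $Y_m$ coincides with that of $(Y_m)_\xi$. The other is that, by associated points of flat families (or the lci property), $Y_m$ is in fact integral.
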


\begin{proof}
The generic curve over $X$ is smooth and geometrically connected,
hence geometrically integral. A product of geometrically integral
curves is geometrically integral, giving the generic-fiber claim.
There are no additional vertical irreducible components: flatness
over the integral base implies, locally on etale charts, that
minimal primes of the total space contract to minimal primes of
the base. Thus every irreducible component dominates the base.
There is only one such component by generic integrality. The total
space is generically reduced since it is smooth on the dense locus
over smooth curves. These facts suffice for the assertion about
its fundamental cycle and for all subsequent pushforwards.
\end{proof}

The following identity is a formal consequence of the definition
of kappa classes, proper--flat base change, and the projection
formula. For those intersection-theoretic operations see
\cite[Sections 1.7 and 3.2]{Fulton}; their extension to the
Deligne--Mumford stack setting is given in \cite{Vistoli}.
We supply the induction for this particular fiber product.

\begin{proposition}[Exact product lifting]\label{prop:kappa-lift}
For all $a_1,\ldots,a_m\ge0$,
\begin{equation}\label{eq:push-lift}
  (f_m)_*\left(\prod_{j=1}^m\ell_j^{a_j+1}\cap[Y_m]\right)
  =\left(\prod_{j=1}^m\kappa_{a_j}\right)\cap[X].
\end{equation}
Consequently, for every additional class $\alpha$ of complementary
degree,
\begin{equation}\label{eq:int-lift}
  \int_X\alpha\prod_{j=1}^m\kappa_{a_j}
  =\int_{Y_m}f_m^*\alpha\prod_{j=1}^m\ell_j^{a_j+1}.
\end{equation}
\end{proposition}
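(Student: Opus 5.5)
We prove \eqref{eq:push-lift} by induction on $m$ and then deduce \eqref{eq:int-lift} from the projection formula. For $m=0$ there is nothing to prove, since $Y_0=X$ and $f_0=\id$. For $m=1$ the formula is the definition \eqref{eq:kappa-definition}, because $Y_1=C$, $f_1=\pi$ and $\ell_1=\ell$. For the inductive step we factor $Y_m=Y_{m-1}\times_X C$. Write $\rho:Y_m\to Y_{m-1}$ for the projection forgetting the last factor and $\pr_m:Y_m\to C$ for the last projection. Then $f_m=f_{m-1}\circ\rho$, and the square
\[
\begin{CD}
Y_m @>{\pr_m}>> C\\
@V{\rho}VV @VV{\pi}V\\
Y_{m-1} @>{f_{m-1}}>> X
\end{CD}
\]
is cartesian. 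Here $\pi$ is proper and flat, and so is $f_{m-1}$.

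The key step is the proper--flat base change of \cite[Proposition 1.7]{Fulton}, applied to the class $\ell^{a_m+1}\cap[C]$. It gives
\[
\rho_*\bigl(\ell_m^{a_m+1}\cap[Y_m]\bigr)
  =f_{m-1}^*\pi_*\bigl(\ell^{a_m+1}\cap[C]\bigr)
  =f_{m-1}^*\kappa_{a_m}\cap[Y_{m-1}].
\]
For this we need two facts. First, flat pullback of fundamental classes matches, i.e.\ $\pr_m^*[C]=[Y_m]$; this is where Lemma~\ref{lem:fiber-integral} enters, since it says $[Y_m]$ is the irreducible, generically reduced fundamental cycle, so flat pullback of $[C]$ introduces no extra components or multiplicities. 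Second, Chern class operators commute with flat pullback, i.e.\ $\pr_m^*(\ell^{a}\cap[C])=\ell_m^{a}\cap[Y_m]$.

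The classes $\ell_1,\dots,\ell_{m-1}$ on $Y_m$ are pulled back from $Y_{m-1}$ along $\rho$. The projection formula for $\rho$ therefore gives
\[
\rho_*\Bigl(\prod_{j=1}^m\ell_j^{a_j+1}\cap[Y_m]\Bigr)
  =\prod_{j<m}\ell_j^{a_j+1}\,f_{m-1}^*\kappa_{a_m}\cap[Y_{m-1}].
\]
Pushing forward by $f_{m-1}$ and applying the projection formula again moves $\kappa_{a_m}$ outside. The induction hypothesis for $m-1$ then yields $\prod_j\kappa_{a_j}\cap[X]$.

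For \eqref{eq:int-lift}, cap both sides of \eqref{eq:push-lift} with $\alpha$ and use the projection formula
\[
(f_m)_*\bigl(f_m^*\alpha\cap z\bigr)=\alpha\cap(f_m)_*z .
\]
Degrees are preserved by proper pushforward to a point.

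The main obstacle is justifying these operations on Deligne--Mumford stacks. In particular, Chern classes of $\ell$ must act as operational classes compatible with flat base change, and the fibre product must be taken in stacks. We handle this by citing \cite{Vistoli}, where flat pullback, proper pushforward and Chern class operations on rational Chow groups of DM stacks satisfy the same compatibilities as in \cite{Fulton}. Since $Y_m$ may be singular, we work throughout with cycles capped against $[Y_m]$ rather than with a ring structure.
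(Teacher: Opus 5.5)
Your proposal is correct and is essentially the paper's proof. Both argue by induction on $m$ through the Cartesian square $Y_m=Y_{m-1}\times_X C$, use proper--flat base change \cite[Proposition 1.7]{Fulton} to get $\rho_*(\ell_m^{a_m+1}\cap[Y_m])=f_{m-1}^*\kappa_{a_m}\cap[Y_{m-1}]$, apply the projection formula for $\rho$ and then for $f_{m-1}$, and finish with one more projection formula for \eqref{eq:int-lift}. One small aside: $\pr_m^*[C]=[Y_m]$ holds for any flat morphism by Fulton's Lemma 1.7.1, so Lemma~\ref{lem:fiber-integral} is not actually needed at that step.
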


\begin{proof}
For $m=0$ the formula is the identity; for $m=1$ it is the
definition of $\kappa_{a_1}$. For the inductive step consider
the Cartesian diagram
\[
\begin{CD}
Y_m=Y_{m-1}\times_X C @>{r}>> C\\
@V{q}VV @VV{\pi}V\\
Y_{m-1} @>>{f_{m-1}}> X.
\end{CD}
\]
Proper pushforward commutes with flat base change
\cite[Proposition 1.7]{Fulton}, and
$f_{m-1}$ is flat. Consequently the pushforward by $q$ of
$r^*(\ell^{a_m+1})$ against the fundamental cycle is
$f_{m-1}^*\kappa_{a_m}$ against $[Y_{m-1}]$. Applying the
projection formula first to $q$ and then to $f_{m-1}$ gives
\begin{align*}
  (f_m)_*\left(\prod_{j=1}^m\ell_j^{a_j+1}\cap[Y_m]\right)
  &=(f_{m-1})_*\left(
     \prod_{j=1}^{m-1}\ell_j^{a_j+1}
     f_{m-1}^*\kappa_{a_m}\cap[Y_{m-1}]\right)\\
  &=\kappa_{a_m}\cap
     (f_{m-1})_*\left(
        \prod_{j=1}^{m-1}\ell_j^{a_j+1}\cap[Y_{m-1}]\right).
\end{align*}
The induction hypothesis proves \eqref{eq:push-lift}.
The formula \eqref{eq:int-lift} follows by one more application
of the projection formula and taking degrees.
\end{proof}

\begin{remark}\label{rem:not-marked-space}
Auxiliary points in $Y_m$ may coincide, and may lie at original
markings or nodes. Thus $Y_m$ is not the moduli space
$\Mbar_{g,n+m}$. All the classes $\ell_j$ are pulled back from
the same original universal curve. This is why
\eqref{eq:push-lift} is an exact product formula with no diagonal
or successive-forgetting corrections.
\end{remark}

\section{Reduction to smooth projective varieties}
\label{sec:stack}

The positivity theorems in Section~\ref{sec:positivity} are
statements for smooth projective varieties. We now justify their
application to the possibly singular fiber-power stacks.
The next proposition combines the standard finite scheme cover
\cite[Theorem 7.4.2]{ACV}, Hironaka's resolution theorem
\cite{Hironaka}, and the projection formula for stack
intersection theory \cite{Vistoli}. It is a reduction from
these results, not a new covering or resolution theorem.

\begin{proposition}\label{prop:smooth-cover}
For each $m\ge0$ there is a smooth projective irreducible variety
$V_m$ and a proper generically finite map $\rho:V_m\to Y_m$ such
that
\begin{equation}\label{eq:cover-degree}
  \rho_*[V_m]=e[Y_m]\quad\text{for some }e\in\QQ_{>0}.
\end{equation}
For every top-degree product $\gamma$ of Chern classes of vector
bundles on $Y_m$,
\begin{equation}\label{eq:cover-integrals}
  \int_{V_m}\rho^*\gamma=e\int_{Y_m}\gamma.
\end{equation}
All bundles used in the proof remain nef after pullback to $V_m$.
\end{proposition}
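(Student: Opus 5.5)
The plan is to build $V_m$ in two stages: first pass from the stack $Y_m$ to a projective scheme by a finite cover, then resolve singularities. The degree identities follow from the projection formula, and nefness is preserved because it is stable under pullback.

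\emph{Step 1: a finite scheme cover of $X$.} By \cite[Theorem 7.4.2]{ACV}, the proper Deligne--Mumford stack $X=\Mbar_{g,n}$ admits a finite surjective map $\phi:X'\to X$ from a projective scheme. Replacing $X'$ by an irreducible component dominating $X$ (with reduced structure), we get $\phi_*[X']=d[X]$ for some $d\in\QQ_{>0}$, where the automorphism factors of \cite{Vistoli} are absorbed into $d$.

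\emph{Step 2: base change to $Y_m$.} Set $Y_m'=Y_m\times_X X'$. This is the $m$-fold fiber power of the pulled-back universal curve $C'=C\times_X X'$, which is a projective family of stable curves over the scheme $X'$. So $Y_m'$ is a projective scheme, flat over $X'$. The argument of Lemma~\ref{lem:fiber-integral} applies verbatim over the integral base $X'$: $Y_m'$ has a unique irreducible component $Z$, and it is generically reduced. Flat base change \cite[Proposition 1.7]{Fulton}, in its stack form from \cite{Vistoli}, gives $\psi_*[Y_m']=d[Y_m]$ for the finite map $\psi:Y_m'\to Y_m$.

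\emph{Step 3: resolution.} Apply Hironaka \cite{Hironaka} to $Z_{\mathrm{red}}$ to obtain a smooth projective irreducible $V_m$ with a birational proper map to $Z$. Set $\rho$ equal to the composite $V_m\to Y_m$. It is proper and generically finite, and $\rho_*[V_m]=d\,\mu[Y_m]$, where $\mu$ is the (positive) multiplicity of $Z$. This gives \eqref{eq:cover-degree} with $e=d\mu>0$.

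\emph{Integrals and nefness.} The identity \eqref{eq:cover-integrals} follows from the projection formula for the proper map $\rho$: for a Chern class product $\gamma$,
\[
\int_{V_m}\rho^*\gamma=\int_{Y_m}\gamma\cap\rho_*[V_m]=e\int_{Y_m}\gamma .
\]
For nefness, the relevant bundles are $\EE$, the $\mathcal L_i$ and the $L$'s, pulled back to $Y_m$ and then to $V_m$. A test curve in $V_m$ maps to a complete curve in the stack, or is contracted to a point, where the degree is $0$. Nefness on the stack is defined by exactly these curve pullbacks, so it is preserved.

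\emph{Main obstacle.} The delicate point is the bookkeeping of rational multiplicities between stack and scheme fundamental classes. Specifically, one must check that pulling back through the finite cover produces a single positive constant $e$, uniform in $\gamma$. I expect this to be settled by working throughout with proper pushforward of cycles in Vistoli's rational Chow theory, where every map in the construction is proper and $[Y_m]$ is irreducible.
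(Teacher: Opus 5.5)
Your proposal is correct and follows essentially the same route as the paper: a finite scheme cover of $X$ from \cite{ACV}, base change of $Y_m$ to a projective scheme over it, Hironaka resolution of the dominant reduced component, and then the projection formula together with stability of nefness under pullback. One harmless slip: since $[Y_m']=\mu[Z]$, the pushforward is $\rho_*[V_m]=(d/\mu)[Y_m]$ rather than $d\mu[Y_m]$, and in fact $\mu=1$ by the generic reducedness you already noted, so, as in the paper, all that matters is $e>0$.
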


\begin{proof}
Choose a finite flat surjective cover $Z\to X$ by an irreducible
complete scheme, as in \cite[Theorem 7.4.2]{ACV}; this is the
cover used explicitly in \cite[Proposition 3.2]{Schmitt}. The
coarse moduli space $X_{\mathrm{coarse}}$ is projective. The
composite $Z\to X_{\mathrm{coarse}}$ is proper and quasi-finite,
hence finite by \cite[Tag 02LS]{Stacks}, so $Z$ is projective.

Because the universal curve is representable and projective,
the base change
\[
  W_m=Y_m\times_X Z
\]
is a projective scheme over $Z$, and hence projective over $\CC$.
Its generic fiber over $Z$ is geometrically integral. Take the
reduced main component that dominates $Z$ and resolve it
projectively in characteristic zero, using \cite{Hironaka}.
The result is a smooth
projective irreducible variety $V_m$. Its map $\rho$ to $Y_m$ is
proper, dominant, and generically finite.

Pushforward of a fundamental cycle under such a map gives
\eqref{eq:cover-degree}. Here $e$ means the degree in rational
Chow theory of stacks \cite{Vistoli}; any generic stabilizer factor is included
in $e$, so no identification with a naive function-field degree
is required. The only essential fact is $e>0$.
Projection formula gives
\[
  \rho_*\bigl(\rho^*\gamma\cap[V_m]\bigr)
     =\gamma\cap\rho_*[V_m]
     =e\,\gamma\cap[Y_m],
\]
and taking degrees proves \eqref{eq:cover-integrals}.
Finally pullback preserves nefness of line and vector bundles.
\end{proof}

The following corollary is the descent deduction proved here
from Proposition~\ref{prop:smooth-cover} and
\cite[Remark 5.4, Theorems 7.4 and 10.1]{RT}.

\begin{corollary}\label{cor:downstairs}
Numerical Schur positivity applies on $Y_m$ to products of
Schur classes of nef bundles. The Hodge-index inequality and
the balancing lemma also apply to such intersection numbers
on $Y_m$, whenever their stated symmetry hypotheses hold there.
\end{corollary}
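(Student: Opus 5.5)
The plan is to transport every statement from the smooth projective variety $V_m$ of Proposition~\ref{prop:smooth-cover} down to $Y_m$, using the fixed positive constant $e$. Throughout, fix $m$ and $\rho:V_m\to Y_m$ with $\rho_*[V_m]=e[Y_m]$, $e>0$. Let $E_j$ be nef bundles on $Y_m$ (the pulled-back Hodge bundle, the line bundles $\ell_j$, pulled-back cotangent lines, and direct sums of these). Schur classes are natural, since they are polynomials in Chern classes and Chern classes commute with pullback. Hence $\rho^*s_\mu(E_j)=s_\mu(\rho^*E_j)$, and each $\rho^*E_j$ is nef by the last assertion of Proposition~\ref{prop:smooth-cover}.

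For positivity, take a top-degree product $\gamma=\prod_j s_{\mu^j}(E_j)$ on $Y_m$. By \eqref{eq:cover-integrals},
\[
\int_{Y_m}\gamma=e^{-1}\int_{V_m}\prod_j s_{\mu^j}(\rho^*E_j)\ge0,
\]
where the inequality is Theorem~\ref{thm:schur-positive} applied on $V_m$.

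For the Hodge-index inequality \eqref{eq:mixed-HI} with nef line bundles $A,B$ on $Y_m$ and $\eta$ a product of Schur classes of nef bundles of codimension $\dim Y_m-2$, each of the three integrals $\int A^2\eta$, $\int B^2\eta$, $\int AB\eta$ equals $e^{-1}$ times the corresponding integral on $V_m$. I will note here that $\dim V_m=\dim Y_m$, since $\rho$ is generically finite and dominant. The inequality upstairs comes from Theorem~\ref{thm:RT} together with Lemma~\ref{lem:linear-algebra}, with nonnegativity of the $B$-square supplied by Theorem~\ref{thm:schur-positive}. Both sides scale by the common factor $e^{-2}$, so the inequality descends to $Y_m$.

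For the balancing lemma, suppose the symmetry $\int_{Y_m}A^iB^{s-i}\Omega=\int_{Y_m}A^{s-i}B^i\Omega$ holds on $Y_m$. Multiplying by $e$ gives the same symmetry for $\rho^*A,\rho^*B,\rho^*\Omega$ on $V_m$; this is exactly the situation of Remark~\ref{rem:numerical-symmetry}. Lemma~\ref{lem:balancing} on $V_m$ then yields \eqref{eq:balancing} upstairs, and dividing by $e>0$ gives it on $Y_m$.

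The only delicate point is that every class involved must be a polynomial in Chern classes of genuine vector bundles on $Y_m$, so that \eqref{eq:cover-integrals} applies. This includes the auxiliary class $h_r(A,B)=s_{(1^r)}(L_A\oplus L_B)$, which is covered by Lemma~\ref{lem:rank-two}. I also need to check that the relevant bundles really are nef on $Y_m$: the classes $\ell_j=\pr_j^*\psi_{n+1}$ are pullbacks of nef bundles by Lemma~\ref{lem:log-line}, and $f_m^*\EE$ is nef by Proposition~\ref{prop:hodge-nef}. Both are preserved by pullback, so this step reduces to naturality and causes no obstruction.
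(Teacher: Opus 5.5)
Your proposal is correct and follows the paper's proof: pull every bundle back along $\rho$, use \eqref{eq:cover-integrals} so that linear inequalities scale by $e$ and the quadratic Hodge-index inequality by $e^2$, and transfer the numerical symmetry to $V_m$ by multiplying by $e$. The only addition in the paper is a remark that an involution $\sigma$ of $Y_m$ exchanging the two classes yields that symmetry by invariance of the fundamental cycle, with no need to lift $\sigma$ to $V_m$; you instead take the symmetry on $Y_m$ as a hypothesis, which is all the statement requires.
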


\begin{proof}
Pull all bundles back along $\rho$ and apply the results on $V_m$.
By \eqref{eq:cover-integrals}, each linear inequality is multiplied
by $e$ and each quadratic Hodge-index inequality by $e^2$.
These positive factors cancel. If $\sigma:Y_m\to Y_m$ exchanges
two classes and fixes the remaining ones, then
\[
  \int_{Y_m}\sigma^*\gamma=\int_{Y_m}\gamma
\]
by invariance of the fundamental cycle. Multiplication by $e$
transfers this equality to the integrals on $V_m$, providing
exactly the numerical symmetry in Lemma~\ref{lem:balancing}.
There is no need to lift $\sigma$ to $V_m$.
\end{proof}

\section{Proof of the three inequalities}
\label{sec:proof}

If any fixed Hodge index $b_t$ exceeds $g$, the fixed product
vanishes and all assertions involving it are immediate. We may
therefore omit such trivial cases, as well as factors
$\lambda_0=1$. Every remaining Hodge factor is a Schur class of
the nef bundle $\EE$.

\subsection{Psi balancing}

Let $P_\psi$ be as in Theorem~\ref{thm:main}(i). Lift its $m$
kappa factors to $Y_m$. On this stack put
\begin{align*}
  A&=f_m^*\psi_1,\qquad B=f_m^*\psi_2,\\
  \Omega&=
     \prod_{i=3}^n(f_m^*\psi_i)^{d_i}
     \prod_{j=1}^m\ell_j^{a_j+1}
     \prod_{t=1}^k c_{b_t}(f_m^*\EE).
\end{align*}
These are first Chern classes and Schur classes of nef bundles
by Section~\ref{sec:nef-moduli}. The top-degree constraint is
\[
  p+q+\sum_{i=3}^n d_i+\sum_j a_j+\sum_t b_t=D.
\]
Thus
\[
  \codim\Omega=D-p-q+m=\dim Y_m-p-q,
\]
as required by the balancing lemma.

The transposition of original markings $1,2$ acts on $X$, on
its universal curve, and on every fiber power. It exchanges
$A$ and $B$. It fixes the cotangent classes at all other
markings, the Hodge bundle, and the logarithmic line
$\omega_\pi(\sum_iD_i)$, since the sum of the sections is
unchanged. Hence it fixes $\Omega$ and supplies all the
numerical symmetries required by Lemma~\ref{lem:balancing}.
Corollary~\ref{cor:downstairs} therefore gives
\[
  \int_{Y_m} A^pB^q\Omega
  \le\int_{Y_m} A^{p+1}B^{q-1}\Omega.
\]
Using \eqref{eq:int-lift} on both sides is exactly
\eqref{eq:main-psi}. This proves part (i).

\subsection{Kappa balancing}

Let $P$ be as in Theorem~\ref{thm:main}(ii). Use $Y_{m+2}$,
reserving its first two factors for the two distinguished
kappa insertions. Define
\begin{align*}
  A&=\ell_1,\qquad B=\ell_2,\\
  \Omega&=\prod_{i=1}^n(f_{m+2}^*\psi_i)^{d_i}
       \prod_{j=1}^m\ell_{j+2}^{a_j+1}
       \prod_{t=1}^k c_{b_t}(f_{m+2}^*\EE).
\end{align*}
Again every factor is a Schur class of a nef bundle. Now the
top-degree condition reads
\[
  p+q+\sum_i d_i+\sum_j a_j+\sum_t b_t=D,
\]
and consequently
\[
  \codim\Omega=D-p-q+m
    =\dim Y_{m+2}-(p+1)-(q+1).
\]
Interchanging the first two universal-curve factors exchanges
$A,B$ and preserves $\Omega$; all base pullbacks and the other
fiber factors are unchanged.

Since $p+1<q+1$, apply Lemma~\ref{lem:balancing} to the
exponent pair $(p+1,q+1)$, using
Corollary~\ref{cor:downstairs}. It yields
\[
  \int_{Y_{m+2}} A^{p+1}B^{q+1}\Omega
  \le\int_{Y_{m+2}} A^{p+2}B^q\Omega.
\]
Formula \eqref{eq:int-lift} identifies the first side with
$\int_X\kappa_p\kappa_qP$ and the second with
$\int_X\kappa_{p+1}\kappa_{q-1}P$. This proves part (ii).

The case $p=0$ is included. Its distinguished fiber exponent
is $1$, in accordance with $\kappa_0=\pi_*\ell$. The case
$q=p+1$ is equality, either directly or by the equality case
of the lemma.

\subsection{Lambda balancing}

If $q=p+1$, equality is automatic. Suppose $q\ge p+2$.
Then $\mu=(q-1,p+1)$ is a partition, and the defining
two-by-two determinant gives
\begin{align}
  s_{(q-1,p+1)}(\EE)
  &=\det\begin{pmatrix}
       c_{q-1}(\EE)&c_q(\EE)\\
       c_p(\EE)&c_{p+1}(\EE)
      \end{pmatrix}\notag\\
  &=\lambda_{p+1}\lambda_{q-1}-\lambda_p\lambda_q.
     \label{eq:lambda-difference}
\end{align}
Its codimension is $p+q$.

Lift the $m$ kappa factors in $P$ to $Y_m$. Equations
\eqref{eq:int-lift} and \eqref{eq:lambda-difference} give
\begin{align*}
 &\int_X
   (\lambda_{p+1}\lambda_{q-1}-\lambda_p\lambda_q)P\\
 &\quad=
   \int_{Y_m}s_{(q-1,p+1)}(f_m^*\EE)
      \prod_{t=1}^k c_{b_t}(f_m^*\EE)
      \prod_{i=1}^n(f_m^*\psi_i)^{d_i}
      \prod_{j=1}^m\ell_j^{a_j+1}.
\end{align*}
The integrand is a product of Schur classes of nef bundles of
total codimension $D+m=\dim Y_m$. It has nonnegative integral
by Theorem~\ref{thm:schur-positive} and
Corollary~\ref{cor:downstairs}. This is part (iii).

For clarity about the rank bounds: if $q>g+1$, then both
$\lambda_q$ and $\lambda_{q-1}$ vanish, so both sides are
zero. If $q=g+1$ and $q\ge p+2$, then $q-1=g$ and
$p+1\le g$; the same Schur determinant remains valid and
proves nonnegativity of the surviving product. If $q\le g$,
the largest partition part is at most $g$, as required.
In genus zero every positive-degree Hodge insertion is zero.

This completes the proof of Theorem~\ref{thm:main}.\hfill$\square$

\section{Further consequences and consistency checks}
\label{sec:consequences}

\subsection{Pairwise log-concavity}

The same positivity theorem also gives a useful stronger local
inequality, although it is not needed for the direct balancing
proof. The following is an immediate application of
\cite[Theorems 7.4 and 10.1]{RT}; we give the substitution
explicitly, rather than claim a separate new log-concavity
theorem. The pure-psi version is the log-concavity property
in \cite[Proposition 3.2]{Schmitt}.

\begin{proposition}\label{prop:log-concavity}
In the geometric setting of Lemma~\ref{lem:balancing}, even
without assuming symmetry, the numbers
\[
  I_i=\int_Y A^iB^{s-i}\Omega\qquad(0\le i\le s)
\]
are nonnegative and satisfy
\[
  I_i^2\ge I_{i-1}I_{i+1}\qquad(1\le i\le s-1).
\]
\end{proposition}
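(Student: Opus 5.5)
Nonnegativity of each $I_i$ is immediate. The integrand $A^iB^{s-i}\Omega$ is a product of Schur classes of nef bundles: $A^i$ and $B^{s-i}$ are repeated factors $s_{(1)}(L_A)$ and $s_{(1)}(L_B)$, and $\Omega$ is such a product by hypothesis. Its total codimension is $N$, so Theorem~\ref{thm:schur-positive} gives $I_i\ge0$.

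For the log-concavity inequality at a fixed $1\le i\le s-1$, I would collect the common factors into one weight,
\[
  \eta_i=A^{i-1}B^{s-i-1}\Omega .
\]
This is again a product of Schur classes of nef bundles, and its codimension is $(i-1)+(s-i-1)+(N-s)=N-2$. Theorem~\ref{thm:RT} then says that $Q_{\eta_i}$ has at most one positive eigenvalue (for $N\ge2$; if $N<2$ then $s\le1$ and there is nothing to prove). Evaluating this form on the three relevant pairs gives
\[
  Q_{\eta_i}(A,A)=I_{i+1},\qquad
  Q_{\eta_i}(B,B)=I_{i-1},\qquad
  Q_{\eta_i}(A,B)=I_i .
\]

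We have $Q_{\eta_i}(B,B)=I_{i-1}\ge0$ by the first step. So Lemma~\ref{lem:linear-algebra}, applied with $b=B$ and $a=A$, gives $I_{i+1}I_{i-1}\le I_i^2$, which is the claim. This is exactly the argument behind \eqref{eq:mixed-HI}, with $\eta$ replaced by $\eta_i$. No symmetry between $A$ and $B$ is used anywhere. As in Remark~\ref{rem:zeros}, the argument never divides by an intersection number, so vanishing terms cause no trouble.

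The only delicate point is confirming that $\eta_i$ satisfies the product-of-Schur-classes hypothesis of Theorem~\ref{thm:RT}, rather than being merely a sum of positive classes. This holds because the powers of $A$ and $B$ enter as repeated rank-one Schur factors, which the statement of Theorem~\ref{thm:schur-positive} and its accompanying discussion explicitly allow. If we want to apply the result on the stacks $Y_m$ rather than on a smooth projective $Y$, Corollary~\ref{cor:downstairs} carries the inequalities over: the linear ones scale by $e$ and the quadratic one by $e^2$, and these positive factors cancel.
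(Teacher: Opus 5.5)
Your proposal is correct and follows the paper's proof. The paper uses the same steps: nonnegativity from Theorem~\ref{thm:schur-positive}, then the weight $\eta_i=A^{i-1}B^{s-i-1}\Omega$ of codimension $N-2$, and finally the Ross--Toma Hodge--Riemann property with the Hodge-index lemma applied to $b=B$. The paper packages that last step as direct substitution into \eqref{eq:mixed-HI}.
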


\begin{proof}
Nonnegativity is Theorem~\ref{thm:schur-positive}.
For an interior index put
$\eta_i=A^{i-1}B^{s-i-1}\Omega$. This is a product of
Schur classes of nef bundles of codimension $N-2$.
In \eqref{eq:mixed-HI}, the three relevant terms are
\[
  \int_Y AB\eta_i=I_i,\qquad
  \int_Y A^2\eta_i=I_{i+1},\qquad
  \int_Y B^2\eta_i=I_{i-1}.
\]
Substitution proves the assertion.
\end{proof}

Via fiber-power lifting, this applies to psi slices with all
other psi, kappa, and lambda insertions fixed. It also applies
to interior kappa slices after shifting each of the two
distinguished exponents by one. The proof of the main theorem
does not infer unimodality from this proposition; the direct
Schur argument already handles possible zero entries.

\subsection{Balanced extrema at fixed total degree}

The following consequence is deduced here from
Theorem~\ref{thm:main}. Its pure-psi specialization is
\cite[Theorem 1.2]{Schmitt}; the balancing-move argument
is the standard optimization argument also used in
\cite[Theorem 3.1 and Sections 3.1--3.4]{Schmitt}.

\begin{corollary}\label{cor:extrema}
Fix a monomial $P$ in psi, kappa, and lambda classes. In the
psi case, choose $r\ge1$ marked points not used by the psi
factors of $P$, and vary the exponents at those points. In
the kappa case, vary $r\ge1$ additional kappa factors while
keeping $P$ fixed. Fix the sum $S$ of the varying indices
so that the products are of top degree. A balanced list of
indices (all differing by at most one) gives a maximum among
those lists. A concentrated list $(S,0,\ldots,0)$ gives a
minimum. The assertions are weak: a maximum or minimum need
not be unique.
\end{corollary}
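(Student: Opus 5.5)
The plan is the standard finite balancing-move argument, with Theorem~\ref{thm:main} supplying every individual step. In the psi case the varying exponents sit at $r$ distinct markings not used by $P$. For any two of them with exponents $x<y$ we apply Theorem~\ref{thm:main}(i): we relabel those two markings as $1,2$ using the $S_n$-symmetry of $\Mbar_{g,n}$, and absorb $P$ together with the remaining varying psi factors into $P_\psi$. This shows that replacing $(x,y)$ by $(x+1,y-1)$ does not decrease the integral. The kappa case is identical, using Theorem~\ref{thm:main}(ii) with the other varying kappa factors absorbed into $P$; commutativity of kappa classes plays the role of relabelling. Throughout, the top-degree condition is preserved because the sum $S$ is fixed.

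\textbf{Maximum.} Suppose a list $x_1,\dots,x_r$ is not balanced. Then some pair satisfies $y-x\ge2$, and moving one unit from $y$ to $x$ does not decrease the integral. This move strictly decreases the potential $\Phi=\sum_i x_i^2$, since $(x+1)^2+(y-1)^2-x^2-y^2=2(x-y)+2<0$. Because $\Phi$ is a nonnegative integer, finitely many moves reach a balanced list, and the integral never decreases along the way. Any two balanced lists with the same sum $S$ are permutations of one another: each has exactly $S-r\lfloor S/r\rfloor$ entries equal to $\lfloor S/r\rfloor+1$ and the rest equal to $\lfloor S/r\rfloor$. By the marking symmetry, respectively by commutativity of kappa classes, they give the same integral. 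Hence every balanced list attains the maximum.

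\textbf{Minimum.} Run the moves in reverse: Theorem~\ref{thm:main} read backwards says that moving a unit from the smaller index to the weakly larger one does not increase the integral. The only care needed is the case $x=y$, which is not strictly covered by $p<q$. Here we use the reverse move from $(x,x)$ to $(x-1,x+1)$, which is exactly the inequality with $(p,q)=(x-1,x+1)$. Starting from any list, we repeatedly take the largest entry and a nonzero other entry $x\le y$, and replace the pair by $(x-1,y+1)$. This is the reverse of the move $(x-1,y+1)\to(x,y)$, which is allowed because $x-1<y+1$. The move strictly increases $\Phi$ and never increases the integral, so the process terminates only at $(S,0,\dots,0)$ up to permutation. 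All concentrated lists give equal integrals by symmetry, so they attain the minimum.

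\textbf{Main obstacle.} The only real subtlety is checking that each two-index move is a legitimate instance of Theorem~\ref{thm:main}. In the psi case this means the chosen markings must be relabelled to $1,2$, and $n\ge2$ is guaranteed by $r\ge2$; the corollary is trivial when $r=1$. Insertions with exponent zero must also be allowed, and the theorem permits this since $0\le p$ and indices may be zero. The remaining point is the termination bookkeeping via $\Phi$, which is routine.
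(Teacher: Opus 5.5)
Your proposal is correct and follows essentially the same route as the paper. For the maximum, unit moves from a larger index to a smaller one are driven by the sum-of-squares potential down to a balanced list; for the minimum, the reverse moves $(a,b)\to(a-1,b+1)$, with $b$ a largest entry and $a$ another positive entry, concentrate all the mass in one entry. Your extra bookkeeping only makes explicit what the paper leaves implicit: relabelling markings to $1,2$, treating the case $x=y$ through $(p,q)=(x-1,x+1)$, and noting that $r=1$ is trivial.
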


\begin{proof}
If two entries differ by at least two, replace $(p,q)$,
$p<q$, by $(p+1,q-1)$. The main theorem says that the value
does not decrease. The sum of squares of the entries decreases
by $2(q-p-1)>0$, so this process terminates at a balanced
list. All balanced lists are permutations of one another,
and symmetry gives the same value on them.

For the minimum, choose a largest entry $b$ and another
positive entry $a\le b$. Apply the balancing inequality to
$(a-1,b+1)$: its balanced successor is $(a,b)$, so replacing
$(a,b)$ by $(a-1,b+1)$ does not increase the value. Repeating
concentrates all mass in one entry. This gives the claimed
minimum.
\end{proof}

Pairwise balancing is a partial-order statement, often
expressed through majorization. It is not monotonicity under
an arbitrary total ordering of partitions. In particular,
pure-psi integral is not monotonic under length-first lexicographic order (see \cite{GNYZ}).

\section*{Disclosure of AI assistance}
\addcontentsline{toc}{section}{Source and preparation notes}

This manuscript was prepared with the assistance of GPT-6 Astra, 
which helped verify the relevant literature and organized
the proofs of the key algebro-geometric theorems in Sections \ref{sec:nef-moduli}-\ref{sec:stack}.

The numerical Schur positivity, product--Schur Hodge--Riemann,
and semipositivity theorems are quoted results. 
The rank-two Schur identity, Hodge-index linear algebra,
universal-curve identity, nefness permanence properties, and
flatness lemma are standard facts whose proofs are included
for completeness. The nefness of the compactified Hodge
bundle is a consequence of semipositivity and the
boundary normalization sequence. The kappa lifting formula
and smooth-model reduction are the applications of 
intersection and covering results explained in their proofs.


\begin{thebibliography}{99}
\addcontentsline{toc}{section}{References}

\bibitem{AC} E. Arbarello and M. Cornalba,
{\em Combinatorial and algebro-geometric cohomology classes on the
moduli spaces of curves}, J. Algebraic Geom. \textbf{5} (1996), 705--749.

\bibitem{ACG}
E. Arbarello, M. Cornalba and P. Griffiths,
\emph{Geometry of Algebraic Curves, Volume II},
Grundlehren der mathematischen Wissenschaften, vol.~268,
Springer, Heidelberg, 2011.

\bibitem{ACV}
D. Abramovich, A. Corti and A. Vistoli,
\emph{Twisted bundles and admissible covers},
Comm. Algebra \textbf{31} (2003), 3547--3618.


\bibitem{CK}
F. Catanese and Y. Kawamata,
\emph{Fujita decomposition over higher dimensional base},
Eur. J. Math. \textbf{5} (2019), 720-–728.

\bibitem{Faber} C. Faber, \emph{Algorithms for computing intersection numbers on moduli spaces of curves, with an application to the
class of the locus of Jacobians}, in New Trends in Algebraic
Geometry,
93--109, Cambridge University Press, 1999.

\bibitem{Fujita}
T. Fujita,
\emph{On K\"ahler fiber spaces over curves},
J. Math. Soc. Japan \textbf{30} (1978), 779--794.


\bibitem{Fulton}
W. Fulton,
\emph{Intersection Theory}, second edition,
Ergebnisse der Mathematik und ihrer Grenzgebiete (3), vol.~2,
Springer, Berlin, 1998.

\bibitem{GNYZ}
J. Guo, P. Norbury, D. Yang and D. Zagier, \emph{Combinatorics and large genus asymptotics of the Br\'ezin--Gross--Witten numbers}, arXiv:2412.20388.

\bibitem{Hironaka}
H. Hironaka,
\emph{Resolution of singularities of an algebraic variety
over a field of characteristic zero, I, II},
Ann. of Math. (2) \textbf{79} (1964), 109--203 and 205--326.

\bibitem{Kawamata}
Y. Kawamata,
\emph{Characterization of abelian varieties},
Compositio Math. \textbf{43} (1981), 253--276.

\bibitem{Knudsen}
F. F. Knudsen,
\emph{The projectivity of the moduli space of stable curves, II:
The stacks $M_{g,n}$},
Math. Scand. \textbf{52} (1983), 161--199.

\bibitem{Kontsevich} M. Kontsevich, \emph{Intersection theory on the moduli space of curves and the matrix
Airy function}, Comm. Math. Phys. \textbf{147} (1992), 1--23.

\bibitem{Lazarsfeld}
R. Lazarsfeld,
\emph{Positivity in Algebraic Geometry II},
Ergebnisse der Mathematik und ihrer Grenzgebiete (3), vol.~49,
Springer, Berlin, 2004. 

\bibitem{LX}
K. Liu and H. Xu,
\emph{New properties of the intersection numbers on moduli spaces
of curves},
Math. Res. Lett. \textbf{14} (2007), 1041--1054.

\bibitem{Macdonald}
I. G. Macdonald,
\emph{Symmetric Functions and Hall Polynomials}, second edition,
Oxford Mathematical Monographs,
Oxford University Press, Oxford, 1995.


\bibitem{RT}
J. Ross and M. Toma,
\emph{On Hodge-Riemann Cohomology Classes},
in \emph{Birational Geometry, K\"ahler--Einstein Metrics and Degenerations},
Springer Proceedings in Mathematics \& Statistics, vol. 409,
Springer, 2023, pp. 763--793.

\bibitem{Schmitt}
J. Schmitt,
\emph{Extremal descendant integrals on moduli spaces of curves:
an inequality discovered and proved in collaboration with AI},
arXiv:2512.14575v1 (2025).

\bibitem{Stacks}
The Stacks Project Authors,
\emph{The Stacks Project}.
Tags~00HS (going down for flat ring maps) and
02LS (proper and locally quasi-finite morphisms are finite).
\url{https://stacks.math.columbia.edu/tag/00HS};
\url{https://stacks.math.columbia.edu/tag/02LS}.

\bibitem{Vistoli}
A. Vistoli,
\emph{Intersection theory on algebraic stacks and on their moduli spaces},
Invent. Math. \textbf{97} (1989), 613--670.

\bibitem{Witten} E. Witten,
\emph{Two-dimensional gravity and intersection theory on moduli
space}, Surveys in Differential Geometry, vol.1, (1991) 243--310.

\end{thebibliography}
\end{document}